\patchcmd{\thebibliography}{\section*{\refname}}{}{}{}
\numberwithin{equation}{section}
\newtheorem{lemma}{Lemma}[section]
\newtheorem{theorem}[lemma]{Theorem}
\newtheorem{corollary}[lemma]{Corollary}
\newtheorem{rem}[lemma]{Remark}
\DeclareMathOperator*{\supp}{supp}
\DeclareMathOperator{\diag}{diag}
\DeclareMathOperator{\diam}{diam}
\DeclareMathOperator{\sign}{sign}
\DeclareMathOperator{\tr}{tr}
\newcommand{\pbox}{\hfill$\Box$\\}
\newcommand{\R}{\mathbb{R}}
\newcommand{\N}{\mathbb{N}}
\newcommand{\Z}{\mathbb{Z}}
\newcommand{\I}{\mathcal{I}}
\renewcommand{\H}{\mathcal{H}}
\newcommand{\F}{\mathcal{F}}
\newcommand{\ee}{E}
\newcommand{\xx}{X}
\newcommand{\kk}{K}
\definecolor{darkviolet}{rgb}{0.58,0,0.83}
\author[F. Marceca]{Felipe Marceca}
\address[F. Marceca]{Department of Mathematics \\  King’s College London \\ United Kingdom}
\email{felipe.marceca@kcl.ac.uk}
\author[J. L. Romero]{Jos\'e Luis Romero}
\address[J. L. Romero]{Faculty of Mathematics \\
	University of Vienna \\
	Oskar-Morgenstern-Platz 1 \\
	A-1090 Vienna, Austria \\and
	Acoustics Research Institute\\ Austrian Academy of Sciences\\Dr. Ignaz Seipel-Platz 2,	AT-1010 Vienna, Austria}
\email{jose.luis.romero@univie.ac.at}
\author[M. Speckbacher]{Michael Speckbacher}
\address[M. Speckbacher]{Faculty of Mathematics \\
	University of Vienna \\
	Oskar-Morgenstern-Platz 1 \\
	A-1090 Vienna, Austria }
\email{michael.speckbacher@univie.ac.at}
\title[Eigenvalue estimates for Fourier concentration operators]{Eigenvalue estimates for Fourier concentration operators on two domains}
\keywords{Discrete Fourier transform, concentration operator, Hankel operator, eigenvalue, spectrum}
\subjclass[2020]{47B35, 47A75, 42B35, 42C40}
\begin{document}
\begin{abstract}
	We study concentration operators associated with either the discrete or the continuous Fourier transform,
	that is, operators that incorporate a spatial cut-off and a subsequent frequency cut-off to the Fourier inversion formula. Their spectral profiles describe the number of prominent degrees of freedom in problems where functions are assumed to be supported on a certain domain and their Fourier transforms are known or measured on a second domain.
	
	We derive eigenvalue estimates that quantify the extent to which Fourier concentration operators deviate from orthogonal projectors, by bounding the number of eigenvalues that are away from 0 and 1 in terms of the geometry of the spatial and frequency domains, and a factor that grows at most poly-logarithmically on the inverse of the spectral margin. The estimates are non-asymptotic in the sense that they are applicable to concrete domains and spectral thresholds, and almost match asymptotic benchmarks.
	
	Our work covers for the first time non-convex and non-symmetric spatial and frequency concentration domains, as demanded by numerous applications that exploit the expected approximate low dimensionality of the modeled phenomena. The proofs build on Israel's work on one dimensional intervals [arXiv: 1502.04404v1]. The new ingredients are the use of redundant wave-packet expansions and a dyadic decomposition argument to obtain Schatten norm estimates for Hankel operators. 
\end{abstract}

\thanks{The authors gratefully acknowledge support from the Austrian Science Fund (FWF): 10.55776/Y1199. F. M. was also supported by the EPSRC: NIA EP/V002449/1.}

\maketitle
	
\section{Introduction and results}

Fourier concentration operators act by incorporating a spatial cut-off and a subsequent frequency cut-off to the Fourier inversion formula. The chief example concerns the Fourier transform on the Euclidean space $\mathcal{F}: L^2(\R^d) \to L^2(\R^d)$, the cut-offs are then given by the indicator functions of two compact domains $E,F \subseteq \mathbb{R}^d$, and the concentration operator is
\begin{align}\label{eq_S}
	S f=\chi_F \F^{-1}\chi_\ee \F \chi_F f, \qquad f \in L^2(\R^d).
\end{align}
These operators, and their analogues defined with respect to the discrete Fourier transform $L^2([-1/2,1/2]^d) \to \ell^2(\mathbb{Z}^d)$ play a crucial role in many analysis problems and fields of application where the shapes of $E,F$ are dictated by various physical constraints or measurement characteristics \cite{MR140732,MR140733,MR147686,MR2883827}.

The basic intuition, stemming from the Fourier uncertainty principle \cite{MR707957, MR2883827}, is that the concentration operator \eqref{eq_S} is approximately a projection with rank $\tr(S) = |E| \cdot |F|$. The error of such heuristic is encoded by the so-called \emph{plunge region}
\begin{align}\label{eq_Ms}
M_\varepsilon(S)= \{ \lambda \in \sigma(S): \varepsilon < \lambda < 1- \varepsilon\}, \qquad \varepsilon \in (0,1/2),
\end{align}
consisting of intermediate eigenvalues. Asymptotics for the cardinality of $M_\varepsilon(S)$ go back to Landau and Widom \cite{MR593228,MR487600} for the case of one dimensional intervals $E=[-a,a]$, $F=[-b,b]$ and read
\begin{align}\label{eqc}
\# M_\varepsilon(S) = c \cdot \log(ab) \cdot \log(\tfrac{1-\varepsilon}{\varepsilon})
+ o(\log(ab)), \qquad \mbox{as } ab \to \infty,
\end{align}
for an explicit constant $c$ that depends on the normalization of the Fourier transform. The modern spectral theory of Wiener-Hopf operators gives similar asymptotics for concentration operators associated to rather general multi-dimensional domains subject to increasing isotropic dilations.

While \eqref{eqc} precisely describes the cardinality of the set $M_\varepsilon(S)$ in the limit $ab \to \infty$, the asymptotic is often insufficient for many purposes because of the quality of the error terms. Indeed, the error term in \eqref{eqc} depends in an unspecified way on the spectral threshold $\varepsilon$, which precludes applications where $\varepsilon$ is let to vary with the domains $E,F$. Such limitations have motivated a great amount of work aimed at deriving \emph{upper bounds} for $\# M_\varepsilon(S)$ that
are \emph{threshold robust}, that is, bounds that are effective for concrete concentration domains and explicit in their dependence on the spectral threshold \cite{is15,KaRoDa,MR3887338,MR3926960,BoJaKa,Os}, significantly improving on more classical results in this spirit \cite{MR169015}.

With the exception of \cite{is15}, the mentioned articles on threshold-robust spectral bounds for Fourier 
concentration operators concern only the one dimensional case, because they exploit a connection with a Sturm–Liouville equation which is specific to that setting. Such methods can be applied to some extent to higher dimensional domains enjoying special symmetries \cite{MR181766, MR673519}. On the other hand, while \cite{is15} studies Fourier concentration operators associated with one dimensional intervals, the technique introduced by Israel is very general, as it relies on an explicit almost diagonalization of the concentration operator. In fact, as we were finishing this work, the preprint version of \cite{isma23} provided an extension of \cite{is15} to higher dimensions (see Sections \ref{sec_e} and \ref{sec_r}).

In this article we derive upper bounds for the number of intermediate eigenvalues \eqref{eq_Ms} associated with either the continuous or discrete Fourier transforms. In contrast to other results in the literature, we obtain estimates that apply to two suitably regular multi-dimensional spatial and frequency domains, which do not need to exhibit special symmetries. In this way, our results cover for the first time many setups of practical relevance, see Section \ref{sec_sig}.

Our proofs build on Israel's technique \cite{is15} and incorporate novel arguments to treat non-convex domains and their discrete counterparts. Instead of the orthonormal wave-packet basis from \cite{is15}, we use more versatile redundant expansions (frames). Second, we introduce a dyadic decomposition method implemented by means of Schatten norm estimates for Hankel operators, see Section \ref{sec_r}.
	
	\subsection{The Euclidean space}\label{sec_e}
	
	Given two compact sets $E,F \subseteq \mathbb{R}^d$, the \emph{Fourier concentration operator} $S: L^2(\mathbb{R}^d) \to L^2(\mathbb{R}^d)$ is defined by \eqref{eq_S}	where $\F$ denotes the Fourier transform
	\begin{align}\label{eq_cft}
	\F f(\xi) = \int_{\mathbb{R}^d} f(x) e^{-2\pi i x \xi} \,dx.
	\end{align}
		A set $E \subseteq \mathbb{R}^d$ is said to have a \emph{maximally Ahlfors regular boundary} if
	there exists a constant $\kappa_{\partial \ee}>0$ such that
	\begin{align*}
	\mathcal{H}^{d-1}\big(\partial E \cap B_{r}(x) \big)\geq \kappa_{\partial \ee} \cdot r^{d-1}, \qquad 0 < r \leq \mathcal{H}^{d-1}(\partial E)^{1/(d-1)}, \quad x \in \partial E.
	\end{align*}
	Here, $\mathcal{H}^{d-1}$ denotes the $(d-1)$-dimensional Hausdorff measure. The term maximal in the definition refers to the range of $r$ for which the estimate is required to hold. See Section \ref{sec:pre} for more context on Ahlfors regularity. In what follows, we denote for short $|\partial E| = \mathcal{H}^{d-1}\big(\partial E)$.

In this article we prove the following.
	
	\begin{theorem}\label{th1}
Let $\ee,F\subseteq\R^d$, $d\geq 2$, be compact domains with maximally Ahlfors regular boundaries with constants $\kappa_{\partial \ee},\kappa_{\partial F}$ respectively,
and assume that that $|\partial\ee||\partial F|\ge 1$. Consider the concentration operator \eqref{eq_S} and its eigenvalues $\{\lambda_n: n \in \mathbb{N}\}$.

Then for every $\alpha\in(0,1/2)$, there exists $A_{\alpha,d}\geq 1$ such that  for $\varepsilon\in(0,1/2)$:
\begin{multline}\label{eq_taa}
	\#\big\{n\in\N:\ \lambda_n \in(\varepsilon,1-\varepsilon)\big\}
	\leq A_{\alpha,d}  \cdot \frac{|\partial \ee|}{\kappa_{\partial \ee}  } \cdot \frac{|\partial F|}{\kappa_{\partial F} } \cdot \log\left( \frac{|\partial \ee||\partial F|}{ \kappa_{\partial \ee}\ \varepsilon}\right)^{2d(1+\alpha)+1}.
\end{multline}	
	\end{theorem}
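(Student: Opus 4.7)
The strategy follows the almost-diagonalization philosophy of \cite{is15}: approximate $S$ by a low-rank operator derived from a wave-packet expansion adapted to the phase space $\R^d\times\R^d$. Two novelties are required to handle general multidimensional domains. First, I would replace the orthonormal basis of \cite{is15} with a redundant wave-packet frame $\{\psi_\lambda\}_{\lambda\in\Lambda}$, where $\Lambda\subseteq\R^{2d}$ is a separated set and each $\psi_\lambda$ is smooth and essentially supported on a unit phase-space cube centred at $\lambda=(x_\lambda,\xi_\lambda)$. Second, the off-diagonal behaviour of $S$ in this frame must be analysed through Schatten-norm bounds for Hankel-type operators, which I would estimate via a dyadic decomposition.

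The main geometric input is the classification of the packets by their position relative to $F\times\ee$. Given a threshold $\rho>0$, I would partition $\Lambda$ into \emph{interior} packets (both signed boundary distances $\delta_F(x_\lambda),\delta_\ee(\xi_\lambda)$ at least $\rho$), \emph{exterior} packets (one of the two at most $-\rho$), and \emph{boundary} packets (the rest). On the interior subspace one expects $S\approx I$ and on the exterior $S\approx 0$, while maximal Ahlfors regularity of $\partial F$ and $\partial\ee$ constrains the number of boundary packets by
\[
C_d\cdot \rho^{2d}\cdot\frac{|\partial\ee|}{\kappa_{\partial\ee}}\cdot\frac{|\partial F|}{\kappa_{\partial F}},
\]
which produces the Ahlfors-regularity prefactors in \eqref{eq_taa}.

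The technical heart is to make the $S\approx I$ and $S\approx 0$ approximations quantitative in operator norm, with error decreasing rapidly in $\rho$. For each dyadic annulus $\{|\delta_F(x_\lambda)|\sim 2^j,\;|\delta_\ee(\xi_\lambda)|\sim 2^k\}$ in the complement of the boundary strip, the corresponding matrix block of $S$ is essentially a localized Hankel operator whose symbol is built from the characteristic functions $\chi_\ee$ and $\chi_F$; its Schatten $p$-norm can be controlled by the boundary Ahlfors measure times a decay factor in $(2^j,2^k)$. The hard part will be summing across the dyadic scales without prohibitive loss: a crude $p=1$ estimate forfeits a factor of $\rho$ per scale, so one must work with Schatten exponents $p=1+\alpha$ slightly above $1$. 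This is precisely the origin of the parameter $\alpha\in(0,1/2)$ and of the polylog exponent $2d(1+\alpha)+1$ in \eqref{eq_taa}, and it is the step where the new dyadic Hankel estimates and the Ahlfors regularity of both boundaries are used simultaneously.

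Finally, choosing $\rho\sim\log\bigl(|\partial\ee||\partial F|/(\kappa_{\partial\ee}\varepsilon)\bigr)$ so that the summed operator-norm error drops below $\varepsilon(1-\varepsilon)$, and applying a Ky~Fan/minimax argument to the self-adjoint operators $S$ and $I-S$, would bound the number of eigenvalues of $S$ in $(\varepsilon,1-\varepsilon)$ by a constant times the number of boundary packets, which yields the right-hand side of \eqref{eq_taa}.
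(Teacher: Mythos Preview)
Your sketch shares the high-level ingredients with the paper (redundant frames, Hankel operators, dyadic decomposition), but the architecture is different and one technical claim is wrong.

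The paper does \emph{not} treat $E$ and $F$ symmetrically through a single phase-space frame. It proceeds in two asymmetric stages. First (Theorem~\ref{th:cube}) it handles the case $F=$ rectangle using a local trigonometric frame in the \emph{spatial} variable only; the packet index set is split according to distance to $\partial E$ alone, and Lemma~\ref{lem:main} (the frame version of Israel's lemma) turns the energy estimates into a bound on $\#M_\varepsilon$. Second (Section~\ref{sec_step2}), a general $F$ is approximated by dyadic unions of cubes $F^\pm$, and the rectangle estimates are assembled via Schatten quasi-norm bounds for the Hankel operators $H_K=(I-P_{E,L})\chi_K P_{E,L}$; Lemma~\ref{lem:half} locates the transition index and controls the gap between $T^-$ and $T^+$. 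Theorem~\ref{th1} is then deduced from the discrete Theorem~\ref{th2} by letting the resolution $L\to\infty$; the wave-packet analysis is never run directly in the continuous setting.

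Your account of the parameter $\alpha$ and of the Schatten index is incorrect, and this is a genuine gap. In the paper $\alpha$ is the Gevrey exponent of the cutoff $\theta$ (property~(iii): $|D^k\theta|\le C_\alpha^k k^{(1+\alpha)k}$), which forces the sub-exponential Fourier decay $\exp(-a_\alpha|\cdot|^{1-\alpha})$ of the frame elements; this is why the threshold scale is $s\asymp\log(\cdot)^{1/(1-\alpha)}$ and why the exponent $d/(1-\alpha)+d\le 2d(1+\alpha)$ appears. The Schatten exponent in the dyadic step is not $1+\alpha$ but $p=\log 2/(2\log\varepsilon^{-1})\to 0^+$, chosen precisely so that the quasi-norm subadditivity $\|A+B\|_p^p\le\|A\|_p^p+\|B\|_p^p$ (valid only for $0<p\le 1$) can be applied across the dyadic cubes; Lemma~\ref{lem:schp} converts the rectangle $M_\varepsilon$-bound into a Schatten-$p$ bound uniformly in small $p$. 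With $p>1$ you lose this subadditivity and the sum over cubes does not close. The extra $+1$ in the exponent $2d(1+\alpha)+1$ comes from summing over the dyadic scales $0\le k\lesssim\log|\partial F|$, not from the Schatten index.
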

The strength of Theorem \ref{th1} lies in the fact that the right-hand side of \eqref{eq_taa} grows only mildly on $\varepsilon$, in agreement with the Landau-Widom
asymptotic formula for one dimensional intervals \eqref{eqc}. In contrast, cruder estimates based on computing first and second moments of concentration operators,
as done often in sampling theory \cite{la67}, lead to error bounds of the order $O(1/\varepsilon)$.

A result closely related to Theorem \ref{th1} is presented in the recent article \cite{isma23}. For $F=[0,1]^d$ and $E=r K$, where $r\geq1$ is a dilation parameter and $K \subset B_1(0) \subset \mathbb{R}^d$ is a \emph{convex, coordinate symmetric domain} \cite[Theorem 1.1]{isma23} gives the following bound for $\varepsilon \in (0,1/2)$:
\begin{align}\label{eq_d}
\#\big\{n\in\N:\ \lambda_n \in(\varepsilon,1-\varepsilon)\big\}
\leq C_d \cdot \max\{ r^{d-1} \log(r/\varepsilon)^{5/2}, \log(r/\varepsilon)^{5d/2}\}.
\end{align}
For large $r$, the right-hand side of \eqref{eq_d} becomes
$O_{d}\big(r^{d-1} \log(r/\varepsilon)^{5/2})$ while Theorem \ref{th1} gives the weaker bound $O_{\alpha,d}\big(r^{d-1} \log(r/\varepsilon)^{2d(\alpha+1)+1}\big)$, or, at best, the slightly better $O_{\alpha,d}\big(r^{d-1} \log(r/\varepsilon)^{2d(\alpha+1)}\big)$ by applying other technical results presented below.\footnote{Indeed, the bound $O_{\alpha,d}\big(r^{d-1} \log(r/\varepsilon)^{2d(\alpha+1)}\big)$ follows from Theorem \ref{th:cube}, presented below, which is applicable to the domains in question, together with the discretization argument in the proof of Theorem \ref{th1}.} On the other hand, Theorem \ref{th1} applies to possibly non-convex, non-coordinate-symmetric and non-dilated domains $E$, and other regular domains $F$ besides cubes.

When $E$ and $F$ are both cubes, even slightly stronger estimates hold, as follows by tensoring sharp bounds for one dimensional intervals (that match the Landau-Widom asymptotic \eqref{eqc}); see, e.g. \cite[Theorem 1.4]{isma23}. This can be used to argue that in \eqref{eq_taa} the factors ${|\partial \ee|}$ and ${|\partial F|}$ cannot be replaced by smaller powers. Similarly, while the power of the logarithm in \eqref{eq_taa} can conceivably be reduced, the logarithmic factor cannot be completely removed.

Our work is in great part motivated by applications where concentration domains may be non-convex, such as the complement of a disk within a two dimensional square; see Section \ref{sec_sig}. Such a domain $E$ is allowed by Theorem \ref{th1} (and Theorems \ref{th2} and \ref{th3} below) and has moreover a favorable regularity constant $\kappa_{\partial E}$.

\subsection{Discretization of continuous domains}
	Theorem \ref{th1} is obtained by taking a limit on a more precise result concerning a discrete setting, which is our main focus.
	
	We consider a \emph{resolution parameter} $L>0$ and define the \emph{discrete Fourier transform} $\mathcal{F}_L:  L^2((-L/2,L/2)^d)\to \ell^2(L^{-1}\Z^d)$ by
	\begin{align}\label{eq_dft}
		\mathcal{F}_L f(k/L)=\int_{(-L/2,L/2)^d} f(x) e^{-2\pi i x k/L} dx, \qquad k \in \mathbb{Z}^d.
	\end{align}
	We think of $L$ as a discretization parameter for an underlying continuous problem. 
	
	Let us define the \emph{discretization at resolution} $L> 0$ of a domain $\ee\subseteq\R^d$ by
	\begin{align}\label{eq_eL}
	\ee_L=L^{-1}\Z^d\cap \ee.
	\end{align}
	Given two compact domains $\ee \subseteq \mathbb{R}^d$ and $F\subseteq (-L/2,L/2)^d$, consider the
	\emph{discretized concentration operator} $T:L^2(F)\to L^2(F)$ given by
	\begin{align}\label{eq_T2}
	T=\chi_F \mathcal{F}_L^{-1}\chi_{\ee_L}\mathcal{F}_L.
	\end{align}
		Our second result reads as follows.
		
	\begin{theorem}\label{th2} 
	Let $\ee,F\subseteq\R^d$, $d\geq 2$, be compact domains with maximally Ahlfors regular boundaries with constants $\kappa_{\partial \ee},\kappa_{\partial F}$ respectively, and assume that that $|\partial\ee||\partial F|\ge 1$. 
	
	Fix a discretization resolution $L\geq |\partial\ee|^{-1/(d-1)}$ such that $F\subseteq (-L/2,L/2)^d$ and consider the discretized concentration operator \eqref{eq_T2} and its eigenvalues $\{\lambda_n: n \in \mathbb{N}\}$.
		
	Then for every $\alpha\in(0,1/2)$ there exists $A_{\alpha,d}\geq 1$ such that for $\varepsilon\in(0,1/2)$:
		\begin{multline}\label{eq_i7}
			\#\big\{n\in\N:\ \lambda_n\in(\varepsilon,1-\varepsilon)\big\}
			\leq A_{\alpha,d}  \cdot \frac{|\partial \ee|}{\kappa_{\partial \ee}  } \cdot \frac{|\partial F|}{\kappa_{\partial F} } \cdot \log\left( \frac{|\partial \ee||\partial F|}{ \kappa_{\partial \ee}\ \varepsilon}\right)^{2d(1+\alpha)+1}.
		\end{multline}
	\end{theorem}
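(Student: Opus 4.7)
The plan is to follow Israel's almost-diagonalization strategy, adapted to multi-dimensional domains by replacing the orthonormal wave-packet basis with a redundant wave-packet frame and by handling the boundary via a dyadic decomposition governed by Schatten-norm estimates for Hankel operators. The high-level template is to construct an auxiliary orthogonal projection $P$ that captures the bulk of the Fourier concentration picture, to bound a Schatten-$p$ norm $\|T-P\|_{S^p}$ for a small $p=p(\alpha,d)$, and to conclude via the eigenvalue--interlacing inequality
\[
\#\big\{n: \lambda_n(T)\in(\varepsilon, 1-\varepsilon)\big\} \le \varepsilon^{-p}\|T-P\|_{S^p}^{p},
\]
where the factor $\varepsilon^{-p}$ becomes a polylogarithm in $1/\varepsilon$ after optimizing $p$ in terms of the admissible parameters.

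First I would fix a scale $\delta$ commensurate with $L$ and introduce a redundant wave-packet frame $\{\varphi_{k,m}\}$ with time centers $k\delta$ and frequency centers $m/\delta$. Since $T=\chi_F\mathcal{F}_L^{-1}\chi_{E_L}\mathcal{F}_L$ commutes approximately with time-frequency localization at this scale, the matrix $\langle T\varphi_{k',m'},\varphi_{k,m}\rangle$ concentrates on the diagonal and decays rapidly off it. I would classify an index $(k,m)$ as \emph{good} when its phase-space center lies safely inside or safely outside $F\times E$ and as \emph{bad} otherwise; then $P$ would be the orthogonal projection onto the span of the good packets whose center lies in $F\times E$. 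Maximal Ahlfors regularity yields the key counting bound: the number of bad packets at distance $\asymp 2^j\delta$ from either boundary is controlled by $(|\partial E|/\kappa_{\partial E})(|\partial F|/\kappa_{\partial F})$ times an explicit geometric factor in $2^j$.

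The core Schatten estimate is then obtained by decomposing $T-P$ dyadically according to these boundary layers and, on each layer, recognizing its block as a compression of a Hankel operator associated to the characteristic-function jump across $\partial E$ or $\partial F$. Sharp $S^p$ bounds for such Hankel operators, summed geometrically in the layer index, should deliver a bound of the form
\[
\|T-P\|_{S^p}^{p}\ \lesssim\ \frac{|\partial E|}{\kappa_{\partial E}}\cdot\frac{|\partial F|}{\kappa_{\partial F}}\cdot \Lambda^{q(\alpha,d)},
\]
where $\Lambda$ is the logarithmic factor appearing in \eqref{eq_i7} and $q$ is an explicit power. Balancing $p$, $\delta$ and the threshold that separates good from bad cells would then produce the exponent $2d(1+\alpha)+1$; the slack $\alpha\in(0,1/2)$ absorbs the unavoidable loss in choosing a Schatten exponent slightly larger than the critical one.

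The main obstacle is this last Hankel--Schatten step: reducing a genuinely $d$-dimensional block of $T-P$ to a summable family of essentially one-dimensional Hankel pieces aligned with the local normal to $\partial E$ or $\partial F$, while preserving the small Schatten exponent. The redundancy of the wave-packet frame is what makes such a local orientation possible, accommodating non-convex and non-symmetric domains that are inaccessible to Israel's rigid ONB construction for intervals. Once this estimate is in hand, what remains is an essentially bookkeeping step combining the contributions of the dyadic boundary layers and inserting the Ahlfors constants.
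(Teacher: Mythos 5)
Your outline reproduces the paper's strategy at the level of ingredients (wave-packet frames, Ahlfors-regularity counting, dyadic boundary decomposition, Hankel operators, Schatten quasi-norms with $p\asymp 1/\log(1/\varepsilon)$), but the two steps that carry the actual content are asserted rather than proved, and one of them is mis-structured. The decisive estimate you need, a bound of the form $\|H_K\|_{S^{2p}}^{2p}\lesssim C_K(D_K+p^{-1})^{a}$ for the pieces $H_K=(I-P_{\ee,L})\chi_K P_{\ee,L}$ of your dyadic decomposition, is exactly what you flag as ``the main obstacle,'' and there is no off-the-shelf ``sharp $S^p$ bound for Hankel operators with characteristic-function symbols'' that delivers it here: the symbol cut-off $\chi_{\ee_L}$ lives on a discrete, non-convex, non-symmetric set. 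In the paper this estimate is bootstrapped, not quoted: one first proves a base case for general $E$ against a \emph{rectangle} $F$ (Theorem \ref{th:cube}), via Israel's energy argument with a local trigonometric frame adapted to the rectangle and the counting Lemmas \ref{lem:L}--\ref{lem:est-gamma}, and then converts that threshold-robust eigenvalue count into a quantitative $p$-Schatten bound as $p\to 0^+$ (Lemma \ref{lem:schp}), which is what feeds the dyadic sum in Lemma \ref{lem:pm}. Without proving this base case your geometric sum over boundary layers has nothing to sum.

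Two further points would block the write-up as sketched. First, your counting claim — that the bad packets in a layer at distance $\asymp 2^j\delta$ from ``either boundary'' number about $(|\partial E|/\kappa_{\partial E})(|\partial F|/\kappa_{\partial F})$ times a geometric factor — has no mechanism behind it: a single-scale phase-space count near $\partial(F\times E)=(\partial F\times E)\cup(F\times\partial E)$ produces sum-type quantities like $|\partial F|\,|E|+|F|\,|\partial E|$, not the product of boundary measures. In the paper the product structure emerges only from the two-stage argument: each maximal dyadic cube $Q\subseteq F$ of side $2^k$ contributes $\lesssim 2^{k(d-1)}|\partial E|/\kappa_{\partial E}$ (times logs) by the rectangle theorem, while Ahlfors regularity of $\partial F$ caps the number of such cubes by $\lesssim 2^{-k(d-1)}|\partial F|/\kappa_{\partial F}$, and the sum over $k\lesssim\log|\partial F|$ produces the extra $+1$ in the logarithmic exponent of \eqref{eq_i7}. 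Second, with a \emph{redundant} frame the ``orthogonal projection onto the span of the good packets'' is not a usable object (the span of a subfamily can be far too large, and $P$ does not inherit any almost-diagonalization); the paper's Lemma \ref{lem:main} deliberately avoids constructing any projection, working instead with the frame inequality and the smallness of $\sum_{\mathcal I_1}\|T\phi_i\|^2+\sum_{\mathcal I_3}\|(I-T)\phi_i\|^2$. Relatedly, if you approximate $F$ by inner and outer dyadic sets you still must pass back to $T$ itself, which in the paper requires eigenvalue monotonicity $\lambda_n(T^-)\le\lambda_n(T)\le\lambda_n(T^+)$ together with the transition-index Lemma \ref{lem:half} and a trace computation; your sketch does not address this step at all.
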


	The eigenvalue sequence on the left-hand side of \eqref{eq_i7} is finitely supported (with a bound that depends on the resolution parameter $L$). In contrast, the right-hand side of \eqref{eq_i7} is independent of $L$. In applications, this helps capture the transition between analog models and their finite computational counterparts, rigorously showing that the latter remain faithful to the former.
	
	\subsection{The discrete Fourier transform}
	Finally, we consider a discrete concentration problem associated with the usual \emph{discrete Fourier transform}, denoted
	\[\mathcal{F}_1:  L^2((-1/2,1/2)^d)\to \ell^2(\Z^d)\]
	for consistency with \eqref{eq_dft}.
	
	Given a finite set $\Omega \subseteq \Z^d$ and $F \subseteq (-1/2,1/2)^d$, the \emph{discrete Fourier concentration operator} $T:L^2(F)\to L^2(F)$ is defined as
	\begin{align}\label{eq_T}
		T = \chi_F \mathcal{F}_1^{-1}\chi_{\Omega}\mathcal{F}_1 .
	\end{align}
	The discrete boundary of a set $\Omega\subseteq \Z^d$ is given by
	\begin{align}\label{eq_b}
		\partial \Omega = \{k \in \Omega: \min\{|j-k|: j \in \mathbb{Z}^d \smallsetminus \Omega\} = 1\}.
	\end{align}
	
	We say that $\Omega\subseteq \Z^d$ has a \emph{maximally Ahlfors regular boundary} if there exists a constant $\kappa_{\partial \Omega}$ such that 
	\begin{align*}
	\inf_{k\in \partial \Omega} \#\big(\partial \Omega\cap k+ [-n/2,n/2)^d\big)\geq \kappa_{\partial \Omega} \cdot n^{d-1},\quad 1\leq n\leq (\#\partial \Omega)^{1/(d-1)},\ k\in \partial \Omega.
	\end{align*}
	(Note the slight notational abuse: though $\Omega \subseteq \mathbb{Z}^d \subseteq \mathbb{R}^d$, the notions of boundary and boundary regularity are to be understood in the discrete sense.)
	
	Our last result reads as follows.
	
	\begin{theorem}\label{th3}
		Let $d\ge 2$, $\Omega\subseteq \Z^d$ a finite set with {maximally Ahlfors regular boundary} and constant $\kappa_{\partial \Omega}$. Let $F\subseteq (-1/2,1/2)^d$ be compact with maximally Ahlfors regular boundary and constant $\kappa_{\partial F}$. Assume that $\#\partial\Omega \cdot |\partial F|\ge 1$, and consider the concentration operator \eqref{eq_T} and its eigenvalues $\{\lambda_n: n \in \mathbb{N}\}$.
				
		Then for every $\alpha\in(0,1/2)$ there exists $A_{\alpha,d}\geq 1$ such that for $\varepsilon\in(0,1/2)$:
		\begin{multline*}
			\#\big\{n\in\N:\ \lambda_n\in(\varepsilon,1-\varepsilon)\big\}
			\leq A_{\alpha,d} \cdot \frac{\#\partial \Omega}{\kappa_{\partial \Omega}} \cdot \frac{|\partial F|}{\kappa_{\partial F} }    \cdot \log\left( \frac{\#\partial \Omega \cdot |\partial F|}{ \kappa_{\partial \Omega}\ \varepsilon}\right)^{2d(1+\alpha)+1}.
		\end{multline*}
	\end{theorem}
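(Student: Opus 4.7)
The plan is to reduce Theorem~\ref{th3} to Theorem~\ref{th2} by thickening the lattice set $\Omega \subseteq \mathbb{Z}^d$ into a continuous domain whose discretization at resolution $L=1$ recovers $\Omega$. Concretely, set
$$E := \bigcup_{k \in \Omega}\bigl(k + [-1/2,1/2]^d\bigr).$$
Since $\mathbb{Z}^d \cap [-1/2,1/2]^d = \{0\}$, we obtain $E \cap \mathbb{Z}^d = \Omega$, i.e.\ $E_1 = \Omega$ in the notation of \eqref{eq_eL}. As $\mathcal{F}_1$ coincides with $\mathcal{F}_L$ at $L=1$ and $F \subseteq (-1/2,1/2)^d$ by hypothesis, the operator \eqref{eq_T} of Theorem~\ref{th3} is literally the operator \eqref{eq_T2} of Theorem~\ref{th2} built from $E$ and $L=1$. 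The task is thus reduced to verifying that $E$ has a maximally Ahlfors regular boundary with constants comparable, up to dimensional factors, to those of $\Omega$, after which Theorem~\ref{th2} yields the conclusion.

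The first step is the perimeter comparison. Up to an $\mathcal{H}^{d-1}$-null set, $\partial E$ is the disjoint union of closed unit $(d-1)$-faces indexed by ordered pairs $(k,k')$ with $k \in \Omega$, $k' \in \mathbb{Z}^d \setminus \Omega$, and $|k-k'|=1$. Each $k \in \partial\Omega$ contributes between $1$ and $2d$ such faces, so $\#\partial\Omega \leq |\partial E| \leq 2d\cdot\#\partial\Omega$. The crux is then showing $\kappa_{\partial E} \gtrsim_d \kappa_{\partial\Omega}$. Fix $x \in \partial E$ on a face adjacent to some $k \in \partial\Omega$ (so $|x-k|\leq \sqrt{d}/2$), and let $r \in (0, |\partial E|^{1/(d-1)}]$. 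For small scales $r \leq c_d$, the intersection of $B_r(x)$ with the face through $x$ already has $\mathcal{H}^{d-1}$-measure $\gtrsim_d r^{d-1}$. For $r > c_d$, choose $n \asymp_d r$ with $k + [-n/2,n/2)^d \subseteq B_r(x)$ and $n \leq (\#\partial\Omega)^{1/(d-1)}$; then the discrete Ahlfors regularity of $\Omega$ yields $\gtrsim \kappa_{\partial\Omega} n^{d-1}$ lattice boundary points in that cube, each contributing at least one face of $\partial E$ inside a dimensional dilation of $B_r(x)$. Absorbing the dilation into the constant produces $|\partial E \cap B_r(x)| \gtrsim_d \kappa_{\partial\Omega} r^{d-1}$.

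With these comparisons in hand, the hypotheses of Theorem~\ref{th2} are in force: $L=1 \geq |\partial E|^{-1/(d-1)}$ since $|\partial E| \geq \#\partial\Omega \geq 1$ (the case $\Omega=\emptyset$ being trivial), and $|\partial E|\cdot|\partial F|\geq \#\partial\Omega \cdot |\partial F|\geq 1$. Substituting $|\partial E|/\kappa_{\partial E} \lesssim_d \#\partial\Omega/\kappa_{\partial\Omega}$ and $|\partial E|\asymp_d \#\partial\Omega$ into \eqref{eq_i7} and absorbing all dimensional constants into $A_{\alpha,d}$ delivers the claimed estimate. The main obstacle is the regularity transfer of the second step: one has to bridge two scales cleanly, joining the small-ball disk estimate with the large-ball discrete sum at $r\asymp 1$ without losing dimensional constants, while ensuring that the dimensional dilation used to replace cubes by balls does not push the relevant radii outside the admissible range of either the discrete Ahlfors hypothesis on $\Omega$ or the continuous one being established for $E$.
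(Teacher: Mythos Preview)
Your proposal is correct and follows essentially the same route as the paper: define $E=\Omega+\overline{Q_1}$, identify the operator with the $L=1$ case of Theorem~\ref{th2}, compare $|\partial E|$ with $\#\partial\Omega$, and verify maximal Ahlfors regularity of $\partial E$ by splitting into small and large radii. The only cosmetic difference is that the paper chooses $n=\lfloor r/\sqrt{d}\rfloor$ so that the entire cube $k_x+\overline{Q_{n+1}}$ (and hence all contributing faces) sits inside $B_r(x)$ directly, avoiding the ``absorb the dilation'' step you flag as the main obstacle.
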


	\subsection{One sided estimates}
	We remark that bounds on the number of intermediate eigenvalues, as in Theorems \ref{th1}, \ref{th2} and \ref{th3}, can be equivalently formulated in terms of the distribution function
	\begin{align*}
		N_\varepsilon := \{ n \in \mathbb{N}: \lambda_n > \varepsilon\}, \qquad \varepsilon\in(0,1).
	\end{align*}
	\begin{rem}\label{rem_N}
	For example, for $\varepsilon\in(0,1)$ under the assumptions of Theorem \ref{th1} we have
	\begin{align}\label{eq_N}
		\big|\#N_\varepsilon(S)-|E|\cdot |F|\big|\leq C_{\alpha,d} \cdot \frac{|\partial \ee|}{\kappa_{\partial \ee}  } \cdot \frac{|\partial F|}{\kappa_{\partial F} } \cdot \log\left( \frac{|\partial \ee||\partial F|}{ \kappa_{\partial \ee}\ \min\{\varepsilon, 1-\varepsilon\}}\right)^{2d(1+\alpha)+1}.
	\end{align}
	\end{rem}
	See Section \ref{sec_rem} for details.
	
	\subsection{Significance}\label{sec_sig}
	Fourier concentration operators arise in problems where functions are assumed to be supported on a certain domain $F$ and their Fourier transforms are known or measured on a second domain $E$. The insight that the class of such functions is approximately a vector space of dimension $|E| \cdot |F|$ is at the core of many classical and modern physical and signal models, and measurement and estimation methods \cite{MR2883827} \cite[``A Historical View'']{grunbaum2022serendipity}. 
	
	While in classical applications, such as telecommunications, the concentration domains are rectangles or unions thereof, the increasingly complex geometric nature of data and physical models has sparked great interest in spatial and frequency concentration domains with possibly intricate shapes. To name a few: (a) In geophysics and astronomy the power sprectrum of various quantities of interest is often assumed to be bandlimited to a disk or annulus and needs to be estimated from measurements taken on a domain as irregular as a geological continent \cite{Simons+2003a,Han+2008a,MR2812375,simons2000isostatic}; (b) The Fourier extension algorithm approximates a function on an arbitrary domain by a Fourier series on an enclosing box and crucially exploits the expected moderate size of the plunge region \eqref{eq_Ms} \cite{MR3803283,MR3989238}; (c) Noise statistics are often estimated from those pixels of a square image located outside a central disk, which is assumed to contain the signal of interest --- thus, the need to sample pure noise leads one to consider the complement of a disk within a two dimensional square as concentration domain (or, more realistically, a set of grid points within that domain) \cite{bhzhsi16, MR3634987, ansi17, anro20}.
	
	The expected low dimensionality of physical and signal models based on spatio-temporal constraints is often exploited without direct computation of eigendecompositions of Fourier concentration operators (which may in fact be ill-posed in the absence of symmetries). Rather, the expected asymptotic spectral profile of such operators informs strategies based on randomized linear algebra and information theory. The quest to analyze such models and methods has motivated a great amount of recent research \cite{is15,KaRoDa,MR3887338,MR3926960,BoJaKa,Os, isma23} which led to deep and far reaching improvements over more classical non-asymptotic results \cite{MR169015}. However, the mentioned literature covers only rectangular or other convex and symmetric domains, which precludes applications involving complex geometries. Due to this limitation, signal and measurement models with a complex geometric nature are often analyzed based on estimates much cruder than \eqref{eq_taa}, which have error factors of the order $1/\varepsilon$ instead of $\log(1/\varepsilon)^\alpha$ \cite{MR2812375,MR3803283, anro20}, and thus poorly reflect the remarkable practical effectiveness of low dimensional models. Estimates on measurement/reconstruction complexity, estimation confidence, or approximation/stability trade-offs are as a consequence orders of magnitude too conservative. Our work bridges this theory to practice gap by providing the first spectral deviation estimates for Fourier concentration operators valid without simplifying symmetry or convexity assumptions that also match the precision of what is rigorously known for intervals \cite{MR140732,sle78, is15,KaRoDa,MR3887338,MR3926960,BoJaKa,Os, isma23}. In addition, while Euclidean Fourier concentration operators help analyze
	computational schemes in their asymptotic continuous limit, our results for the discrete Fourier transform apply, more quantitatively, to finite settings, as they occur in many applications.
	
	\subsection{Methods and related literature}\label{sec_r}
	
	We work for the most part with the discrete Fourier transform and then obtain consequences for the continuous one by a limiting argument. Theorem \ref{th2} is thus a more precise and quantitative version of Theorem \ref{th1}, and is proved in two steps. We first revisit Israel's argument \cite{is15} and adapt it to prove eigenvalue estimates when one of the domains is a rectangle and the other one is a general multi-dimensional discrete domain (Theorem \ref{th:cube} below). These estimates are slightly stronger than those in Theorem \ref{th2}, and the extra precision is exploited in the subsequent step. We follow the method of almost diagonalization with wave-packets, which we achieve, unlike \cite{is15}, through a redundant system (frame) instead of an orthonormal basis. The versatility of redundant expansions helps us avoid requiring symmetries from the concentration domain.
	
	The second step is a decomposition, rescaling, and dyadic approximation argument, implemented by means of $p$-Schatten norm estimates for certain Hankel operators, and especially by quantifying those estimates as a function of $p$, as $p \to 0^+$. In this way we reduce the problem to the case where one of the domains is a rectangle, while relying on the refined estimates derived in the first step.
	
	Our intermediate result, Theorem \ref{th:cube}, is close in spirit to Theorem 1.1 in \cite{isma23} (which appeared as we were finishing this article). The estimates in \cite{isma23}, formulated in the context of the continuous Fourier transform and concerning dilated convex domains, are stronger than what follows from Theorem \ref{th:cube} in that regime, as \cite[Theorem 1.1]{isma23} involves smaller powers of a certain logarithmic factor (see also Section \ref{sec_e} and \eqref{eq_d}). On the other hand, Theorem \ref{th:cube} concerns a sufficiently regular possibly non-convex and non-symmetric domain, and covers the discrete Fourier transform (while Theorem \ref{th2} concerns two such domains).
	
	We also mention our recent work on concentration operators for the short-time Fourier transform \cite{maro21}, that also makes use of Ahlfors regularity and Schatten norm estimates. Though the goals and results are philosophically similar to those in the present article, the settings are rather different from the technical point of view. Indeed, the arguments used in \cite{maro21} rely on the rapid off-diagonal decay of the reproducing kernel of the range of the short-time Fourier transform, and do not seem to be applicable to Fourier concentration operators.
	
	Finally, we comment on the notion of maximal Ahlfors regularity introduced in this article, which is a variant of more common notions (see Section \ref{sec_br}). While this condition is sufficient to treat all the applications that we are aware of, it is probably non-optimal. Indeed, the eigenvalue estimate \eqref{eq_taa} is robust under certain geometric operations on the concentration domains that may not preserve Ahlfors regularity (see for example the dyadic decomposition and approximation in Section \ref{sec_dec}). However, we do not presently know how to formulate a simple and elegant regularity assumption that leads to a significant refinement of our results. 
	
	The remainder of the article is organized as follows. Section \ref{sec:pre} sets up the notation and provides background on boundary regularity. Section \ref{sec_i} revisits and aptly adapts the technique from \cite{is15}. This is used in Section~\ref{sec_gc} to prove Theorem \ref{th:cube}. Theorem \ref{th2} is proved in Section \ref{sec_step2}, Theorem \ref{th1} is proved in Section \ref{sec:con}, and Theorem \ref{th3} is proved in Section \ref{sec_dis}. Remark \ref{rem_N} is proved in Section \ref{sec_rem}.
	
	\section{Preliminaries}\label{sec:pre}
	
	\subsection{Notation}
	We shall focus on Theorem \ref{th2} and set up the notation accordingly. Theorems \ref{th1} and \ref{th3} will be obtained afterwards as an application of Theorem \ref{th2}.
	
	We denote cubes by $Q_a=[-a/2,a/2)^d$. The Euclidean norm on $\mathbb{R}^d$ is denoted $|\cdot|$.
	For two non-negative functions $f,g$ we write $f \lesssim g$ if there exist a constant $C$ such that $f(x) \leq C g(x)$, and write $f \asymp g$ if $f \lesssim g$ and $g \lesssim f$. The implied constant is allowed to depend on the dimension $d$ and the parameter $\alpha$ from Theorems \ref{th1}, \ref{th2} and \ref{th3}, but not on other parameters.
	
	We enumerate the eigenvalues of a compact self adjoint operator $S: \H \to \H$ acting on a Hilbert space $\H$
	as follows:
	\begin{align}
	\label{eqord}
		\lambda_k=\inf\{\|S-S_0\| : \ S_0\in \mathcal{L}(L^2(\R^d)),\,\dim(\mathrm{Range} (S_0))<k\},
	\qquad  k \geq 1.
	\end{align}
	Then $\{ \lambda_k: k \geq 1 \} \smallsetminus \{0\} = \sigma(S) \smallsetminus \{0\}$ as sets with multiplicities --- see, e.g., \cite[Lemma 4.3]{dj}. 
	
	Recall that the \emph{discretization at resolution} $L> 0$ of a set $E \subseteq \mathbb{R}^d$ is defined by \eqref{eq_eL}. We also write \[\ee_L^c= L^{-1}\Z^d\smallsetminus\ee_L\] and $\partial\ee_L$ for the points in $\ee_L$ which are at distance $L^{-1}$ of $\ee_L^c$:
	\begin{align*}
	\partial\ee_L = \big\{k/L \in E_L: \min\{|k/L-j/L|: j/L \in \ee_L^c\} = L^{-1}\big\}.
	\end{align*}
	For $L=1$ this is consistent with \eqref{eq_b}.
		
	We will work with the discrete Fourier transform $\mathcal{F}_L:  L^2((-L/2,L/2)^d)\to \ell^2(L^{-1}\Z^d)$ given by \eqref{eq_dft} and reserve the notation $\mathcal{F}f$ or $\widehat{f}$ for the continuous Fourier transform \eqref{eq_cft}. Note that if $\supp (f)\subseteq (-L/2,L/2)^d$, then $\mathcal{F}_L f(k/L)=\mathcal{F} f(k/L)$ for every $k\in \Z^d$.

	We also write $P_{\ee,L}=\mathcal{F}_L^{-1}\chi_{\ee_L}\mathcal{F}_L.$ For $F\subseteq (-L/2,L/2)^d$ we define the operator $T=T_{\ee,F,L}:L^2(F)\to L^2(F)$ by 
	\begin{align*}
	T=T_{\ee,F,L}=\chi_F P_{\ee,L}
	\end{align*}
	and let $\lambda_n=\lambda_n(T)$ denote its eigenvalues as in \eqref{eqord}. An easy computation shows that
	\[T_{t^{-1}\ee,tF,tL}=\mathcal{M}_{t^{-1}}T_{\ee,F,L}\mathcal{M}_t, \qquad t>0,\]
	where $\mathcal{M}_t$ denotes the \emph{dilation operator}
	\begin{align*}
	\mathcal{M}_t f(x)=f(tx).
	\end{align*}
	In particular,
	\begin{align}
		\label{eq:stretch}
		\lambda_n(T_{t^{-1}\ee,tF,tL})=\lambda_n(T_{\ee,F,L}), \qquad n\in \N.
	\end{align}
	
	\subsection{Boundary regularity}\label{sec_br}
	Let us introduce regularity of sets in more generality and discuss a few properties.
	
	An $\mathcal{H}^{d-1}$-measurable set $\xx \subseteq \R^d$ is said to be {\em lower Ahlfors $(d-1)$-regular} (regular for short) at scale $\eta_\xx>0$ if there exists a constant $\kappa_\xx>0$ such that
	\begin{align*}
		\mathcal{H}^{d-1}\big(\xx \cap B_{r}(x) \big)\geq \kappa_\xx \cdot r^{d-1}, \qquad 0 < r \leq \eta_\xx, \quad x \in \xx.
	\end{align*}
	(See for example \cite[Definitions 2.1 and 2.2]{MR4510936}).
	In the literature, Ahlfors regularity is usually stated for $\eta_X\sim \text{diam}(X)$. In contrast, we work with various scales and introduce the term {\em maximally Ahlfors regular} whenever the regularity condition holds at scale $\eta_X=\mathcal{H}^{d-1}(\xx )^{1/(d-1)}$. The use of this scale instead of the usual $\text{diam}(X)$ allows us to include disconnected sets with distant connected components in our analysis.
	
	Note that if $\xx \subseteq \R^d$ is regular at scale $\eta_\xx>0$ with constant $\kappa_\xx>0$ and $t>0$, then $t \xx \subseteq \R^d$ is regular at scale $\eta_{t\xx}=t \eta_\xx$ with constant $\kappa_{t\xx}=\kappa_\xx$.
	By differentiation around a point of positive $\mathcal{H}^{d-1}$-density, 
	\begin{align}\label{eqkappa}
	\kappa_\xx\le c_d,
	\end{align}
	for any regular $\xx$ of finite $\mathcal{H}^{d-1}$-measure. We also mention that if $X$ is regular with parameters $\eta_X$ and $\kappa_X$, then choosing an arbitrary $x\in X$ gives
	\begin{align}\label{eqone}
	\mathcal{H}^{d-1}\big(\xx  \big)\ge \mathcal{H}^{d-1}\big(\xx \cap B_{\eta_X}(x) \big)\geq \kappa_\xx \cdot \eta_X^{d-1}.
	\end{align}
	
	We shall use the following basic result, derived from \cite{Ca}.
	
	\begin{lemma}\label{lem:coar}
		There exists a universal constant $C_d>0$ such that for every compact set $\xx \subseteq \mathbb{R}^{d}$ that is regular at scale $\eta_\xx>0$ with constant $\kappa_\xx$ and every $s>0$,
		\begin{align*}
			|\xx+B_s(0)| \leq  \frac{C_d}{\kappa_\xx}  \cdot \mathcal{H}^{d-1}(\xx) \cdot s \cdot \Big(1+\frac{s^{d-1}}{\eta^{d-1}_\xx}\Big).
		\end{align*}
	\end{lemma}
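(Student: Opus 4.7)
The plan is to bound $|X + B_s(0)|$ by a standard Vitali-type covering argument, splitting into the two regimes $s \leq \eta_X$ and $s > \eta_X$ so as to exploit the regularity condition at the scale at which it is available.

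First I would pick, for a parameter $r > 0$ to be chosen, a maximal $r$-separated subset $\{x_i\} \subseteq X$. By maximality the balls $B_r(x_i)$ cover $X$, and by separation the balls $B_{r/2}(x_i)$ are pairwise disjoint. Consequently the dilated balls $B_{r+s}(x_i)$ cover $X + B_s(0)$, giving
\begin{align*}
|X + B_s(0)| \leq \#\{x_i\} \cdot \omega_d (r+s)^d.
\end{align*}
To bound $\#\{x_i\}$ one uses regularity: provided $r/2 \leq \eta_X$, the definition applied at each $x_i$ yields $\mathcal{H}^{d-1}(X \cap B_{r/2}(x_i)) \geq \kappa_X (r/2)^{d-1}$, and summing the disjoint pieces gives $\#\{x_i\} \leq 2^{d-1} \mathcal{H}^{d-1}(X)/(\kappa_X r^{d-1})$.

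In the regime $s \leq \eta_X$ I would choose $r = s$, which is admissible since $r/2 \leq \eta_X$. Then $(r+s)^d = (2s)^d$, so the two displayed estimates combine to yield $|X+B_s(0)| \lesssim \mathcal{H}^{d-1}(X) s/\kappa_X$, matching the ``$1$'' term in the bracket on the right-hand side. In the regime $s > \eta_X$ I would instead choose $r = \eta_X$. The count of centers is now bounded by $2^{d-1}\mathcal{H}^{d-1}(X)/(\kappa_X \eta_X^{d-1})$, while $(r+s)^d \leq (2s)^d$ since $\eta_X \leq s$, and the combined bound produces $|X + B_s(0)| \lesssim \mathcal{H}^{d-1}(X) s^d/(\kappa_X \eta_X^{d-1})$, which is exactly the ``$s^{d-1}/\eta_X^{d-1}$'' term.

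Adding the two cases (or taking the maximum and absorbing the loss into $C_d$) yields the stated inequality. There is no genuine obstacle here: the argument is elementary once one observes that the single parameter $r$ must be capped at $\eta_X$ because that is the largest scale at which the Ahlfors lower bound is assumed to hold; the additive factor $1 + s^{d-1}/\eta_X^{d-1}$ is precisely what one pays for having to cap $r$ in the large-$s$ regime. I would simply verify that the constants produced by Vitali selection and by the volume comparison $(r+s)^d \leq (2s)^d$ or $(2r)^d$ only depend on $d$, and collect everything into a single $C_d$.
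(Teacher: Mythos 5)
Your proposal is correct, and it proves the lemma by a genuinely different route than the paper. The paper does not use a covering argument at all: it quotes Caraballo's theorems on the $(d-1)$-dimensional measure of the level sets $\{x:\ d(x,X)=r\}$, which give $\mathcal{H}^{d-1}(\{d(\cdot,X)=r\})\lesssim_d \kappa_X^{-1}\mathcal{H}^{d-1}(X)\,(1+r^{d-1}/\eta_X^{d-1})$ for a.e.\ $r$, and then integrates this in $r$ over $[0,s)$ via the coarea formula (using $|\nabla d(x,X)|=1$ a.e.) to obtain the volume bound. Your argument instead takes a maximal $r$-separated set in $X$, counts its cardinality by summing the lower Ahlfors bound over the disjoint balls $B_{r/2}(x_i)$, and covers $X+B_s(0)$ by the balls $B_{r+s}(x_i)$, with the cap $r=\min\{s,\eta_X\}$ producing exactly the two terms $1$ and $s^{d-1}/\eta_X^{d-1}$. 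This is sound: the choice $r/2\le\eta_X$ keeps you inside the range where regularity is assumed, the disjointness makes the counting bound $\#\{x_i\}\le 2^{d-1}\mathcal{H}^{d-1}(X)/(\kappa_X r^{d-1})$ legitimate (the case $\mathcal{H}^{d-1}(X)=\infty$ being trivial), and all constants are dimensional. What your approach buys is self-containedness and explicit constants — no appeal to Caraballo's level-set theorem or the coarea formula — while the paper's approach expresses the Minkowski-content-type bound as an integral of level-set areas, which is slightly sharper in spirit and fits the geometric-measure-theoretic framework it cites, at the cost of invoking those external results.
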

	\begin{proof}
		From \cite[Theorems 5 and 6]{Ca} it follows that  
		\begin{align*}
			\mathcal{H}^{d-1} \big(\{x \in \mathbb{R}^{d}:\ d(x,\xx)=r\} \big) \leq  \frac{C_d}{\kappa_\xx}  \cdot \mathcal{H}^{d-1}(\xx) \cdot \Big(1+\frac{r^{d-1}}{\eta_\xx^{d-1}}\Big),
		\end{align*}
		for almost every $r>0$, and in addition, $| \nabla d(x,\xx) | = 1$, for almost every $x \in \mathbb{R}^{d}$. From this and the coarea formula --- see, e.g., \cite[Theorem 3.11]{evga92} ---
		it follows that
		\begin{align*}
			|\xx+B_s(0)|&= \int_{\R^d} \chi_{[0,s)}(d(x,\xx)) dx 
			= \int_{\R^d} \chi_{[0,s)}(d(x,\xx)) | \nabla d(x,\xx) | dx 
			\\ &= \int_0^s \mathcal{H}^{d-1}\big(\{x:\ d(x,\xx)=r\} \big) dr
			\leq  \frac{C_d}{\kappa_\xx} \mathcal{H}^{d-1}(\xx)  \int_0^s  \Big(1+\frac{r^{d-1}}{\eta_\xx^{d-1}}\Big) dr
			\\ &\leq  \frac{C_d}{\kappa_\xx}  \mathcal{H}^{d-1}(\xx) s\Big(1+\frac{s^{d-1}}{\eta_\xx^{d-1}}\Big). \qedhere
		\end{align*}
	\end{proof}

	\begin{corollary}\label{cor:coar}
		For $\ee\subseteq\R^d$ a compact domain with regular boundary at scale $\eta_{\partial\ee}\ge 1$ with constant $\kappa_{\partial\ee}$ and a discretization resolution $L\ge 1$, we have
		\begin{align*}
			L^{-d}\#\ee_L\lesssim |\ee|+\frac{|\partial\ee|}{\kappa_{\partial\ee} L}.
		\end{align*}
		In particular, for $d\geq 2$,
		\begin{align*}	 	L^{-d}\#\ee_L\lesssim  \frac{\max\{|\partial\ee|^{d/(d-1)},1\}}{\kappa_{\partial\ee}}.	 
		\end{align*}
	\end{corollary}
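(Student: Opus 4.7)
The strategy is to convert the discrete count $\#E_L$ into a volume by associating with each lattice point $k/L \in E_L$ the half-open cube $C_k = k/L + [0,1/L)^d$. These cubes are pairwise disjoint and of volume $L^{-d}$, so
\[
L^{-d}\#E_L \;=\; \Big|\bigcup_{k/L \in E_L} C_k\Big|.
\]
Since every $x \in C_k$ satisfies $|x - k/L| \le \sqrt{d}/L$ and $k/L \in E$, the union lies in $E + B_{\sqrt{d}/L}(0)$. A point in this dilation that is not in $E$ lies within distance $\sqrt{d}/L$ of a point of $E$, so the segment to its nearest point of $E$ crosses $\partial E$ at a point of distance at most $\sqrt{d}/L$. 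Hence
\[
\bigcup_{k/L \in E_L} C_k \;\subseteq\; E \,\cup\, \bigl(\partial E + B_{\sqrt{d}/L}(0)\bigr),
\]
which yields
\[
L^{-d}\#E_L \;\le\; |E| + \bigl|\partial E + B_{\sqrt{d}/L}(0)\bigr|.
\]

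For the second summand I would invoke Lemma~\ref{lem:coar} with $X = \partial E$ and $s = \sqrt{d}/L$. Under the standing assumptions $L \ge 1$ and $\eta_{\partial E} \ge 1$, the ratio $s/\eta_{\partial E}$ is bounded by $\sqrt{d}$, so the factor $1 + s^{d-1}/\eta_{\partial E}^{d-1}$ collapses to a dimensional constant. The lemma therefore delivers
\[
\bigl|\partial E + B_{\sqrt{d}/L}(0)\bigr| \;\lesssim\; \frac{|\partial E|}{\kappa_{\partial E}}\cdot\frac{1}{L},
\]
which combined with the previous display gives the first claimed inequality.

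To obtain the ``in particular'' statement I would absorb both $|E|$ and $|\partial E|/(\kappa_{\partial E} L)$ into the quantity $\max\{|\partial E|^{d/(d-1)},1\}/\kappa_{\partial E}$. For the perimeter term, $L\ge 1$ gives $|\partial E|/L \le |\partial E|$, and a case split on whether $|\partial E| \ge 1$ or $|\partial E| < 1$ (using $d/(d-1) \ge 1$) yields $|\partial E|/L \le \max\{|\partial E|^{d/(d-1)},1\}$. For the volume term I would apply the Euclidean isoperimetric inequality $|E| \lesssim_d |\partial E|^{d/(d-1)}$, which is valid because Ahlfors regularity of $\partial E$ implies finite perimeter bounded by $|\partial E|$. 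Combined with the universal upper bound $\kappa_{\partial E} \le c_d$ from \eqref{eqkappa}, this gives $|E| \lesssim |\partial E|^{d/(d-1)}/\kappa_{\partial E}$, which is dominated by the target expression.

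The only nontrivial step is the passage from the first estimate to the second, since it requires invoking the isoperimetric inequality together with \eqref{eqkappa} to trade the free appearance of $|E|$ for a quantity controlled by $|\partial E|$ and $\kappa_{\partial E}$ alone; the geometric heart of the argument, namely the near-boundary covering, is a routine application of Lemma~\ref{lem:coar} once the scales are chosen correctly.
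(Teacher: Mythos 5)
Your proposal is correct and follows essentially the same route as the paper: decompose $L^{-d}\#E_L$ into the volume of $E$ plus the volume of a $\sqrt{d}/L$-neighborhood of $\partial E$, control the latter via Lemma~\ref{lem:coar} (using $L\ge 1$, $\eta_{\partial E}\ge 1$ to absorb the scale factor), and then pass to the second bound via the isoperimetric inequality and \eqref{eqkappa}. The only cosmetic difference is that the paper splits the lattice cubes into those contained in $E$ and the rest, whereas you cover the whole union by $E\cup(\partial E+B_{\sqrt{d}/L}(0))$; the two decompositions are interchangeable.
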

	\begin{proof}
		Recall that $Q_{L^{-1}}=L^{-1}[-1/2,1/2)^d$ and define
		$\ee_L'=\{m\in \ee_L :\ m+Q_{L^{-1}}\subseteq \ee\}$.
		From Lemma~\ref{lem:coar}, we get
		\begin{align*}
			L^{-d}\#\ee_L &= 	\Big|\bigcup_{m\in\ee_L'} m+Q_{L^{-1}}\Big|+	\Big|\bigcup_{m\in\ee_L\smallsetminus\ee_L'} m+Q_{L^{-1}}\Big|\le |\ee|+|\partial\ee+B_{L^{-1}\sqrt{d}}(0)|
			\\ &\lesssim |\ee|+ \frac{|\partial\ee|}{\kappa_{\partial\ee} L}.
		\end{align*}
		Finally, the second inequality follows from the isoperimetric inequality $|\ee|\lesssim|\partial\ee|^{d/(d-1)}$ and \eqref{eqkappa}.
	\end{proof}
	
	\section{Israel's argument revisited}\label{sec_i}
	
	We now revisit the core argument of \cite{is15} and aptly adapt it so as to treat multi-dimensional and discrete domains.
		
	\subsection{Israel's lemma}
	
	We need a slight generalization of Lemma~1 in \cite{is15}, phrasing it in terms of frames rather than orthonormal bases. We include a proof for the sake of completeness.
	
	Recall that a frame for a Hilbert space $\H$ is a subset of vectors 	$\{\phi_i\}_{i\in\mathcal{I}}$ for which the exist constants $0<A,B<\infty$ ---
	called lower and upper frame bounds ---	such that
	\begin{align*}
		A \| f \|^2 \leq \sum_{i \in \mathcal{I}} |\langle f, \phi_i \rangle|^2 \leq B \|f\|^2, \qquad f \in \H.
	\end{align*}
	If, moreover, $A=B$, the say that the frame is \emph{tight}.
	
	\begin{lemma}\label{lem:main}
		Let $T:\H\to \H$ be a positive, compact, self-adjoint operator 
		on a Hilbert space $\H$ with $\|T\|\leq 1$ and eigendecomposition 
		$T=\sum_{n\ge 1}\lambda_n\langle \cdot ,f_n\rangle f_n$.
		
		Let $\{\phi_i\}_{i\in\mathcal{I}}$ be a frame of unit norm vectors for $\H$ with lower frame bound $A$.		If $\mathcal{I}= \mathcal{I}_1\cup\mathcal{I}_2\cup\mathcal{I}_3$, and  
		\begin{equation}\label{eq:eps-cubed}
			\sum_{i\in\mathcal{I}_1}\|T\phi_i\|^2+\sum_{i\in\I_3}\|(I-T)\phi_i\|^2\leq \frac{A}{2}\varepsilon^2,
		\end{equation}
		then $\# M_\varepsilon(T)\leq \frac{2}{A}\#\I_2,$ where $M_\varepsilon(T)$ is defined as in \eqref{eq_Ms}.
	\end{lemma}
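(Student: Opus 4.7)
The plan is to bound the dimension of the intermediate spectral subspace
$V := \mathrm{span}\{f_n : \lambda_n \in (\varepsilon, 1-\varepsilon)\}$,
so that $\dim V = \#M_\varepsilon(T)$, by estimating how much of each frame vector $\phi_i$ projects onto $V$. The heart of the argument is the elementary observation that on $V$ both $T$ and $I-T$ are bounded below by $\varepsilon$, so inverting either operator on $V$ costs at most a factor $1/\varepsilon$.

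First, I would let $P_V$ denote the orthogonal projection onto $V$ and fix an orthonormal basis $\{e_k\}_{k=1}^{\dim V}$ of $V$. Since $\langle e_k, \phi_i\rangle = \langle e_k, P_V \phi_i\rangle$, summing the lower frame inequality over $k$ and exchanging the order of summation yields
\[
A\,\dim V \leq \sum_k \sum_i |\langle e_k, \phi_i\rangle|^2 = \sum_i \sum_k |\langle e_k, P_V \phi_i\rangle|^2 = \sum_i \|P_V\phi_i\|^2.
\]

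Next, I would estimate $\|P_V\phi_i\|^2$ separately on each class. Writing $\phi_i = \sum_n \alpha_n f_n + \phi_i^0$ with $\phi_i^0 \in \ker T$, one has $P_V\phi_i = \sum_{\lambda_n\in(\varepsilon,1-\varepsilon)} \alpha_n f_n$. For $i\in\mathcal{I}_1$, the inequality $\lambda_n>\varepsilon$ on $V$ gives
\[
\|P_V\phi_i\|^2 = \sum_{\lambda_n\in(\varepsilon,1-\varepsilon)} |\alpha_n|^2 \leq \varepsilon^{-2}\sum_{\lambda_n\in(\varepsilon,1-\varepsilon)} \lambda_n^2|\alpha_n|^2 \leq \varepsilon^{-2} \|T\phi_i\|^2,
\]
and a symmetric computation based on $1-\lambda_n>\varepsilon$ on $V$ gives $\|P_V\phi_i\|^2 \leq \varepsilon^{-2}\|(I-T)\phi_i\|^2$ for $i\in\mathcal{I}_3$. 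For $i\in\mathcal{I}_2$, I would simply use the trivial bound $\|P_V\phi_i\|^2 \leq \|\phi_i\|^2 = 1$.

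Combining these three estimates with the hypothesis \eqref{eq:eps-cubed} yields
\[
A\,\dim V \leq \#\mathcal{I}_2 + \varepsilon^{-2}\cdot \tfrac{A}{2}\varepsilon^2 = \#\mathcal{I}_2 + \tfrac{A}{2}.
\]
If $\dim V = 0$ the conclusion is trivial; otherwise $\dim V \geq 1$ forces $\dim V - 1/2 \geq \dim V/2$, so $A\,\dim V/2 \leq \#\mathcal{I}_2$, giving $\#M_\varepsilon(T) = \dim V \leq 2\#\mathcal{I}_2/A$, as required. I do not anticipate any genuine obstacle: the only subtlety is to realize that for a redundant system one must work with $P_V\phi_i$ rather than $\phi_i$ itself, and once that bookkeeping is in place the extension from Israel's orthonormal-basis version to a general frame with lower bound $A$ is transparent.
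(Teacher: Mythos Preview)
Your proof is correct and follows essentially the same approach as the paper: both project onto the intermediate spectral subspace, use the lower frame bound summed over an orthonormal basis of that subspace, bound $\|P_V\phi_i\|$ by $\varepsilon^{-1}\|T\phi_i\|$ or $\varepsilon^{-1}\|(I-T)\phi_i\|$ on $\mathcal{I}_1,\mathcal{I}_3$, and finish with the same arithmetic absorbing the $A/2$ term. The only cosmetic difference is that you expand explicitly in the eigenbasis while the paper phrases the same inequality via the commutation $TP_V=P_VT$ and $\|P_VT\phi_i\|\le\|T\phi_i\|$.
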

	\proof Let $S_\varepsilon=\text{span}\{f_n:\ \lambda_n\in(\varepsilon,1-\varepsilon)\}$ and let $P_\varepsilon:\H\to S_\varepsilon$ denote the orthogonal projection onto $S_\varepsilon$. Observe that 
	$$
	\varepsilon\|f\|\leq \|Tf\|,\quad\text{and}\quad\varepsilon\|f\|\leq \|(I-T)f\|,\quad f\in S_\varepsilon,
	$$
	where the second inequality follows from the fact that for $f\in S_\varepsilon$ one has $\|f\|-\|(I-T)f\|\leq  \|Tf\|\leq (1-\varepsilon)\|f\|$.
	Also note that $T$ and $P_\varepsilon$ commute since $S_\varepsilon$ is spanned by a collection of eigenvectors of $T$.
	Therefore, by \eqref{eq:eps-cubed} we obtain
	$$
	\sum_{i\in\I_1\cup\I_3}\varepsilon^2\|P_\varepsilon \phi_i\|^2\leq \sum_{i\in\I_1}\|TP_\varepsilon \phi_i\|^2+\sum_{i\in \I_3}\|(I-T)P_\varepsilon \phi_i\|^2\leq \frac{A}{2}\varepsilon^2,
	$$
	which implies 
	\begin{equation}\label{eq:A/2}
		\sum_{i\in\I_1\cup\I_3}\|P_\varepsilon \phi_i\|^2\leq \frac{A}{2}.
	\end{equation}
	Using the frame property we get for  $f \in S_\varepsilon$: 
	$$
	A\|f\|^2\leq \sum_{i\in\I}|\langle f,\phi_i\rangle |^2=\sum_{i\in\I}|\langle f,P_\varepsilon \phi_i\rangle|^2.
	$$
	Now assume that $\text{dim}(S_\varepsilon)\geq 1$ (otherwise the result is trivial), take an orthonormal basis $\{\psi_k\}_{k=1}^{\text{dim}(S_\varepsilon)}$ of $S_\varepsilon$, and sum the inequality above over all basis elements to derive
	\begin{align*}
		A \cdot \# M_\varepsilon(T)&=A \cdot \text{dim}(S_\varepsilon)= A\sum_{k=1}^{\text{dim}(S_\varepsilon)}\|\psi_k\|^2\leq \sum_{k=1}^{\text{dim}(S_\varepsilon)}\sum_{i\in\I}|\langle \psi_k,\phi_i\rangle|^2
		\\
		&= \sum_{i\in\I}\|P_\varepsilon\phi_i\|^2\leq \sum_{i\in\I_2}\|\phi_i\|^2+\frac{A}{2}= \#\I_2 +\frac{A}{2}\le \#\I_2 +\frac{A}{2}\# M_\varepsilon(T),
	\end{align*}
	where in the second line we used \eqref{eq:A/2}.
	This shows that  $\# M_\varepsilon(T)\leq \frac{2}{A}\#\I_2.$ \pbox
	
	\subsection{Local trigonometric frames}
	
	In this section, we construct a tight frame that allows us to apply Lemma~\ref{lem:main}.
	
	Let $\alpha>0$, and $\theta\in\mathcal{C}^\infty(\R)$ be such that 
	\begin{enumerate}[label=(\roman*)]
		\item\label{theti} $\theta(x)=1,$ for $x\geq 1$, and $\theta(x)= 0,$ for $x\leq -1$,
		\item\label{thetii} $\theta(-x)^2+\theta(x)^2=1$, for every $x\in\R$,
		\item\label{thetiii}  $|D^k\theta(x)|\leq C_\alpha^k k^{(1+\alpha)k}$, for all $k\in\N_0$, all $x\in\R$, and a constant $C_\alpha>0$.
	\end{enumerate}
	See, for example, \cite[Proposition~1]{is15} or \cite[Chapter 1]{MR1408902} for the existence of such a function.
	
	Let $W>0$. We decompose the interval $\big(\hspace{-3pt}-\frac{W}{2},\frac{W}{2}\big)$ into disjoint intervals 
	$$
	I_j=x_j+\frac{W}{3 \cdot 2^{|j|+1}}[-1,1),\quad j\in\Z,
	$$
	where 
	\[x_j=\frac{\sign(j) W}{2}\Big(1-\frac{1}{2^{|j|}}\Big).\]
	Note that $|I_j|=|I_{|j|}|=2|I_{|j|+1}|$ for every $j\in\Z$. We will also denote $D_j=I_j\cup I_{j+1}$.
	Now define 
	$$
	\theta_j(x)=\theta\left(\frac{2(x-x_{j})}{|I_{j}|}\right)\theta\left(-\frac{2(x-x_{j+1})}{|I_{j+1}|}\right).
	$$
	We have that  $\theta_j(x)=0$ for $x \notin D_j $, and furthermore by properties \ref{theti} and \ref{thetii} 
	\begin{align*}
		\|\theta_j\|_2^2&= \int_{I_j}\theta\left(\frac{2(x-x_{j})}{|I_{j}|}\right)^2 dx+\int_{I_{j+1}}\theta\left(-\frac{2(x-x_{j+1})}{|I_{j+1}|}\right)^2 dx
		\\&= \frac{|I_{j}|}{2} \int_{-1}^1 \theta(x)^2 dx + \frac{|I_{j+1}|}{2} \int_{-1}^1 \theta(x)^2 dx=\frac{|D_{j}|}{2}.
	\end{align*}
	We define the set of vectors
	\begin{equation}\label{def:Phi}
		\phi_{j,k}(x)=\sqrt{\frac{2}{|D_{j}|}}\cdot \theta_j(x)\cdot\exp\left(2\pi i\frac{xk}{|D_j|}\right),\quad j,k\in\Z,
	\end{equation}
	and note that $\|\phi_{j,k}\|_2=1$.
	
	\begin{lemma}
		The family $\{\phi_{j,k}\}_{j,k\in\Z}$ defined in \eqref{def:Phi} forms a tight frame for $L^2(-W/2,W/2)$ with frame constants $A=B=2$.
	\end{lemma}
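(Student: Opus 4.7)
The plan is to reduce the frame inequality to two ingredients: a Fourier--Parseval identity on each interval $D_j$, and a partition-of-unity identity for $\{|\theta_j|^2\}_{j \in \Z}$.

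First I would verify that $D_j = I_j \cup I_{j+1}$ is genuinely an interval, i.e., that $I_j$ and $I_{j+1}$ are adjacent for every $j \in \Z$. This is an elementary calculation: from the formula $x_j=\frac{\sign(j) W}{2}(1-1/2^{|j|})$ and $|I_j|=W/(3\cdot 2^{|j|})$ one checks that the right endpoint $x_j+|I_j|/2$ coincides with the left endpoint $x_{j+1}-|I_{j+1}|/2$ (separately treating the cases $j\geq 0$ and $j\leq -1$). Consequently, $\{|D_j|^{-1/2}e^{2\pi i x k/|D_j|}\}_{k\in\Z}$ is an orthonormal basis of $L^2(D_j)$. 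Applying Parseval to the function $f\theta_j\in L^2(D_j)$ yields
\begin{align*}
\sum_{k\in\Z}|\langle f,\phi_{j,k}\rangle|^2
= \frac{2}{|D_j|}\sum_{k\in\Z}\Big|\int_{D_j}(f\theta_j)(x)e^{-2\pi i xk/|D_j|}\,dx\Big|^2
= 2\int|f(x)|^2|\theta_j(x)|^2\,dx.
\end{align*}

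The main step is then to prove the pointwise identity $\sum_{j\in\Z}|\theta_j(x)|^2=1$ for $x\in(-W/2,W/2)$. Fix $x\in I_j$ and inspect which terms can contribute. Since $\supp(\theta_m)\subseteq D_m=I_m\cup I_{m+1}$, only $m=j-1$ and $m=j$ may be nonzero. Writing $t:=2(x-x_j)/|I_j|\in[-1,1)$, I would use the support properties \ref{theti} of $\theta$ and the adjacency computation above to argue that the "far" factor in each of $\theta_{j-1}(x)$ and $\theta_j(x)$ automatically equals $1$: indeed, for $x\in I_j$ one has $x\le x_{j+1}-|I_{j+1}|/2$, so $-2(x-x_{j+1})/|I_{j+1}|\ge 1$, and similarly $2(x-x_{j-1})/|I_{j-1}|\ge 1$. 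This reduces the computation to
\begin{align*}
\theta_j(x)=\theta(t),\qquad \theta_{j-1}(x)=\theta(-t),
\end{align*}
and invoking property \ref{thetii} gives $|\theta_{j-1}(x)|^2+|\theta_j(x)|^2=\theta(-t)^2+\theta(t)^2=1$. Terms with $|m-j|\geq 2$ vanish because their supports are disjoint from $I_j$.

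Combining the two steps and interchanging the sum with the integral,
\begin{align*}
\sum_{j,k\in\Z}|\langle f,\phi_{j,k}\rangle|^2
= 2\int_{-W/2}^{W/2}|f(x)|^2\sum_{j\in\Z}|\theta_j(x)|^2\,dx
= 2\|f\|_2^2,
\end{align*}
so $A=B=2$ as claimed. The main obstacle is the bookkeeping in the partition-of-unity step, where one must carefully track which of the four $\theta$-factors are evaluated in the transition region $(-1,1)$ and which have already saturated to $0$ or $1$; once this is organized, the identity $\theta(t)^2+\theta(-t)^2=1$ closes the argument.
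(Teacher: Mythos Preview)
Your proof is correct and follows essentially the same approach as the paper: both arguments apply Parseval on each interval $D_j$ to obtain $\sum_k|\langle f,\phi_{j,k}\rangle|^2=2\|f\theta_j\|_2^2$, and then use property~\ref{thetii} to collapse the sum over $j$. The only cosmetic difference is that you isolate the pointwise partition-of-unity identity $\sum_j|\theta_j(x)|^2=1$ as a separate lemma, whereas the paper embeds the same computation inside a decomposition $f=\sum_j f\chi_{I_j}$ followed by a re-indexing.
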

	
	\begin{proof}
		Let $f \in L^2(-W/2,W/2)$ and set $f_j:=f \chi_{I_j}$ so that $f=\sum_{j\in \Z}f_j$. Since $\text{supp}(\theta_j)\subseteq D_j=I_j\cup I_{j+1}$, we  observe 
		\begin{align*}
			\sum_{j,k\in\Z}|\langle f,\phi_{j,k}\rangle|^2&=\sum_{j,k\in\Z}\left|\langle f_j+f_{j+1},\phi_{j,k}\rangle\right|^2.
		\end{align*}
		As $\left\{ |D_j|^{-1/2}\exp\left(2\pi ikx/|D_j|\right)\right\}_{k\in\Z}$ is an orthonormal basis for $L^2(D_j)$, we find that
		$$
		\sum_{k\in\Z}|\langle f_j+f_{j+1},\phi_{j,k}\rangle|^2=2 \| (f_j+f_{j+1})\theta_j \|^2_2=2\|f_j\theta_j \|^2_2+2\| f_{j+1}\theta_j \|^2_2.
		$$
		Combining both identities and using property \ref{thetii}, we conclude
		\begin{align*}
			\sum_{j,k\in\Z}|\langle f,\phi_{j,k}\rangle|^2&=2\sum_{j\in\Z}\Big(\|f_j\theta_j \|^2_2+\| f_{j+1}\theta_j \|^2_2\Big)
			=2 \sum_{j\in\Z}\Big(\|f_j\theta_j \|^2_2+\| f_j\theta_{j-1} \|^2_2\Big)
			\\ &= 2 \sum_{j\in\Z} \int_{I_j}|f(x)|^2 \left(\theta\left(\frac{2(x-x_{j})}{|I_{j}|}\right)^2+\theta\left(-\frac{2(x-x_{j})}{|I_{j}|}\right)^2\right) dx
			\\&=2 \sum_{j\in\Z}\|f_j \|^2_2=2\|f\|_2^2. \qedhere
		\end{align*}
	\end{proof}
	
	Let $0<W_i\le L,\ i=1,...,d,$  and consider the rectangle $\prod_{i=1}^d(-W_i/2,W_i/2)$. Set also
	\begin{align*}
	W_{\max} := \max_{i=1,...,d} W_i.
	\end{align*}
	We define a frame for $L^2\big(\prod_{i=1}^d(-W_i/2,W_i/2)\big)$ via the tensor product 
	\begin{equation*}
		\Phi_{j,k}(x)=\Phi_{j_1,...,j_d,k_1,...k_d}(x_1,...,x_d)=\phi_{j_1,k_1}(x_1)\cdot\ldots\cdot \phi_{j_d,k_d}(x_d),
	\end{equation*}
	where each family $\{\phi_{j_i,k_i}(x_i)\}_{j_i,k_k\in\Z}$ is the frame for $L^2(-W_i/2,W_i/2)$ given by \eqref{def:Phi}.
	This construction also yields a tight frame with frame bounds equal to $2^d$.
	
	\subsection{Energy estimates}
	Consider
	$$
	\psi_j(x)=\theta_j\Big(|D_j|x+x_j-\frac{W}{3 \cdot 2^{|j|+1}}\Big), \quad x\in \R,\ j\in\Z.
	$$ 
	A straightforward computation shows that $\psi_j$ is supported on $[ 0,1]$ and satisfies $|D^k \psi_j(x)|\leq \widetilde{C_\alpha}^k k^{(1+\alpha)k}$ by property \ref{thetiii}. As shown in \cite{MR1658223} or \cite[Lemma~4]{is15} it thus follows that $|\widehat{\psi_j}(\xi)|\leq A_\alpha \cdot \exp\left(-a_\alpha|\xi|^{(1+\alpha)^{-1}}\right)$. Since $1-\alpha\leq (1+\alpha)^{-1}$, we derive that $t^{(1+\alpha)^{-1}}\geq t^{1-\alpha}-1$ for $t\geq 0$. Adjusting the constant $A_\alpha$, we therefore get
	\begin{equation}\label{eq:F(theta)}
		|\widehat{\theta_j}(\xi)|\leq A_\alpha \cdot |D_j| \cdot \exp\left(-a_\alpha(|D_j|\cdot|\xi|)^{1-\alpha }\right),\quad \xi\in\R.
	\end{equation}
	With this at hand, we estimate the decay of
	$$
	\mathcal{F}(\Phi_{j,k})(\xi)=2^{d/2} \prod_{i=1}^d| D_{j_i}|^{-1/2}\cdot \widehat{\theta_{j_i}}\left(\xi_i-\frac{k_i}{| D_{j_i}|}\right),\quad \xi\in\R^d.
	$$
	Define
	\[M_j=\diag(|D_{j_1}|,...,|D_{j_d}|)\in\R^{d\times d}.\]
	 By \eqref{eq:F(theta)} (possibly enlarging $A_\alpha$) and $|\xi|^{1-\alpha}\leq \sum_{i=1}^d|\xi_i|^{1-\alpha}$, it follows 
	\begin{align}
		|\mathcal{F}(\Phi_{j,k})(\xi)|&\leq A_\alpha^d\prod_{i=1}^d   | D_{j_i}|^{1/2}\cdot \exp\left(-a_\alpha\big||D_{j_i}|\xi_i-k_i\big|^{1-\alpha }\right)\nonumber 
		\\
		&\le A_\alpha^d \cdot \text{det}(M_j)^{1/2}\cdot \exp\left(-a_\alpha\big|M_j(\xi-\xi_{j,k})\big|^{1-\alpha }\right)\label{eq:decay},
	\end{align}
	where $(\xi_{j,k})_i=k_i| D_{j_i}|^{-1}$.
	
	Consider now a compact domain $E \subseteq \mathbb{R}^d$.	
	Let $s\ge 1$ be a parameter that will be determined later.
	For $j\in\Z^d$ fixed, we cover the index set $\Z^d$ with three subsets as follows
	\begin{align}\label{eq_set1}
	\begin{aligned}
		\mathcal{L}_j^{\text{low}}&:=\big\{k\in\Z^d:\ \text{dist}(k,M_j\ee_L^c)\geq s  \big\};
		\\
		\mathcal{L}_j^{\text{med}}&:=\big\{k\in\Z^d:\ \text{dist}(k,M_j\ee_L)< s \text{, and } \text{dist}(k,M_j\ee_L^c)< s\};
		\\
		\mathcal{L}_j^{\text{high}}&:=\big\{k\in\Z^d:\ \text{dist}(k,M_j\ee_L)\geq s  \big\}.
	\end{aligned}
	\end{align}
	(Here, $\text{dist}$ is associated with the usual Euclidean distance.) We claim that
	\begin{align}
		\label{eq:lj}
		\mathcal{L}_j^{\text{med}}\subseteq\big\{k\in\Z^d:\ \text{dist}(k,M_j\partial\ee_L)< s\big\};
	\end{align}
	let us briefly sketch an argument.
	Fix $k\in\mathcal{L}_j^{\text{med}}$ and let $k_0\in M_j L^{-1}\Z^d$ minimize the distance to $k$. For any point $x\in M_j L^{-1}\Z^d$ with $|k-x|<s$ we can construct a path of adjacent points in $M_j L^{-1}\Z^d$ from $x$ to $k_0$ whose distance to $k$ is decreasing. In particular, choosing $x_1\in M_j\ee_L$ and $x_2\in M_j\ee_L^c$ at distance less than $s$ from $k$, we can connect $x_1$ and $x_2$ through a path of adjacent points in $M_jL^{-1}\Z^d$ that stays at distance less than $s$ from $k$. Necessarily, one of the points in the path must belong to  $M_j\partial\ee_L$, which proves \eqref{eq:lj}.
	
	The indices $(j,k)$ with $k\in \mathcal{L}_j^{\text{med}}$, and $j$ satisfying a certain condition specified below (see \eqref{eq_set2}) will play the role of $\mathcal{I}_2$ in Lemma \ref{lem:main}, so we need to estimate $\#(\mathcal{L}_j^{\emph{med}})$. 
		
	\begin{lemma}\label{lem:L}
		Let $\ee\subseteq \R^d$ be a compact domain with regular boundary at scale $\eta_{\partial\ee}\ge 1$ and constant $\kappa_{\partial\ee}$. 
	Let $L\ge W_{\max}$ and $s\ge 1$. Then for all $j\in\Z^d$ we have
	\begin{align*}
		\#\big\{k\in\Z^d:\ \emph{dist}(k,M_j\partial\ee_L)< s\big\}\lesssim \max\{W_{\max},1/\eta_{\partial\ee}\}^{d-1}\cdot  \frac{|\partial \ee|}{\kappa_{\partial\ee} } \cdot s^d.
	\end{align*}
\end{lemma}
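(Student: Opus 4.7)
\emph{Proof plan.} The argument proceeds in three steps.

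First, I would relate $\partial\ee_L$ to $\partial\ee$. Any $p \in \partial\ee_L$ has a lattice neighbor $q \in L^{-1}\Z^d\smallsetminus\ee$ at distance $1/L$; since $p \in \ee$ and $q \notin \ee$, the segment $[p,q]$ crosses $\partial\ee$, so $\text{dist}(p, \partial\ee) \leq 1/L$. The diagonal map $M_j$ has operator norm $t_{\max} := \max_i |D_{j_i}| \leq W_{\max} \leq L$, hence $M_j\partial\ee_L \subseteq M_j\partial\ee + B_1(0)$. Together with $s \geq 1$, this yields
\[
\{k\in\Z^d : \text{dist}(k, M_j\partial\ee_L) < s\} \subseteq \{k\in\Z^d : \text{dist}(k, M_j\partial\ee) < 2s\}.
\]

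Second, I would convert the lattice count to a covering number. If $N(M_j\partial\ee, 2s)$ balls of radius $2s$ cover $M_j\partial\ee$, their $4s$-dilates cover the $2s$-neighborhood, and each contains $\lesssim s^d$ integer lattice points (since $s\geq 1$); thus
\[
\#\{k\in\Z^d : \text{dist}(k, M_j\partial\ee) < 2s\} \lesssim N(M_j\partial\ee, 2s)\cdot s^d.
\]

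Third, I would bound this covering number by Ahlfors regularity. Because $M_j$ has operator norm $t_{\max}$, any ball of radius $\rho$ is mapped into a ball of radius $t_{\max}\rho$; hence $N(M_j\partial\ee, 2s) \leq N(\partial\ee, 2s/t_{\max})$. A standard maximal $\rho/2$-packing argument together with the maximally Ahlfors regular boundary condition for $\ee$ at scale $\eta_{\partial\ee}$ gives
\[
N(\partial\ee, \rho) \lesssim \frac{|\partial\ee|}{\kappa_{\partial\ee}\cdot \min(\rho, \eta_{\partial\ee})^{d-1}}.
\]
Substituting $\rho = 2s/t_{\max}$ and using $t_{\max} \leq W_{\max}$ with $s \geq 1$, one has $\max(t_{\max}/(2s), 1/\eta_{\partial\ee})^{d-1} \lesssim \max(W_{\max}, 1/\eta_{\partial\ee})^{d-1}$. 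Chaining the three steps produces the claimed bound.

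The only subtlety lies in tracking which scale controls the covering number: when $2s/t_{\max} \leq \eta_{\partial\ee}$ the reciprocal factor is $(t_{\max}/s)^{d-1}$, while for $2s/t_{\max} > \eta_{\partial\ee}$ it is $1/\eta_{\partial\ee}^{d-1}$. The uniform factor $\max(W_{\max}, 1/\eta_{\partial\ee})^{d-1}$ in the statement dominates both regimes; I do not expect any genuine difficulty beyond keeping this bookkeeping straight.
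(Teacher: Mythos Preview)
Your proof is correct and yields the stated bound, though by a somewhat different route from the paper. The paper works more discretely: it first reduces to counting $\{k\in\Z^d: k\in M_j\partial\ee_L+Q_1\}$ (times $s^d$), then uses a coordinate-wise lattice lemma---for $0<a\le 1$ and $X\subseteq\R$, one has $\#\{k\in\Z:k\in aX+[-1/2,1/2)\}\le 2\,\#\{k\in\Z:k\in X+[-1/2,1/2)\}$---applied in each coordinate to replace the anisotropic $M_j$ by the scalar $W_{\max}$, passes from $\partial\ee_L$ to $\partial\ee$ afterward, and finally invokes Ahlfors regularity by summing $\mathcal{H}^{d-1}$-mass over boundedly overlapping cubes of side $\asymp 1/W_{\max}$. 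You instead pass to the continuous boundary $\partial\ee$ at the outset via the operator-norm bound $\|M_j\|\le W_{\max}\le L$, and then phrase everything through covering numbers, bounding $N(M_j\partial\ee,2s)\le N(\partial\ee,2s/t_{\max})$ and appealing to the standard packing estimate for Ahlfors regular sets. Your argument is a bit more conceptual and would apply verbatim to any linear $M_j$ with the same operator norm (not just diagonal ones), whereas the paper's discrete rescaling is tailored to diagonal matrices but keeps all the combinatorics at the lattice level; both routes use Ahlfors regularity in essentially the same way.
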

\begin{proof}
	Since for every $x\in \R^d$ the cube $x+Q_1$ contains one point in $\Z^d$,
\begin{align}\label{eqdis}
		\#\big\{k\in\Z^d:\ \text{dist}(k,M_j\partial\ee_L)< s\}&\lesssim s^d\cdot\#\big\{k\in\Z^d:\  k\in M_j\partial\ee_L+Q_{1}\}.
	\end{align}
Second, we claim that for $0<a\leq 1$ and a set $X \subset \mathbb{R}$,
\begin{align}\label{eq_p}
\#\big\{k\in\Z:\    k\in aX+[-1/2,1/2)\}\le 2 \, \#\big\{k\in\Z:\   k\in  X+[-1/2,1/2)\}.
\end{align}
To see this, consider a map that sends $k=ax+t$, with $x \in X$ and $t \in [-1/2,1/2)$ to the unique integer in $x+[-1/2,1/2)$, denoted $k_*$ (the choice of $x$ and $t$ may be non-unique). Whenever $k_*=k'_*$, it follows that $|k-k'| \leq 1$. Hence, at most two $k's$ are mapped into the same $k_*$.

From \eqref{eqdis}, if we apply \eqref{eq_p} componentwise (noting that $(M_j)_{i,i}\leq W_{\max}$), we obtain for $s\geq 1$
	\begin{align*}
		\#\big\{k\in\Z^d:\ \text{dist}(k,M_j\partial\ee_L)< s\}&
		&\lesssim s^d\cdot\#\big\{k\in\Z^d:\ k\in W_{\max}\partial\ee_L+Q_{1} \}.
	\end{align*}

	Since for every $x\in \partial\ee_L$ there exists $x'\in \partial \ee$ such that $|x-x'|\leq L^{-1}$, and $W_{\max}/L\leq 1$, it follows that 
	\begin{align}
		\#\big\{k\in\Z^d:\ \text{dist}(k,M_j\partial\ee_L)< s\}&\lesssim s^d\cdot\#\big\{k\in\Z^d:\ k\in W_{\max}\partial\ee+Q_{3} \} \notag \\
		&=: s^d\cdot\# (\mathcal{K}_{W_{\max}}).\label{eq:pl}
	\end{align}
	Now let $k\in \mathcal{K}_{W_{\max}}$. There exists at least one point $x_k\in  \partial\ee $ such that $k\in W_{\max}x_k+Q_{3}$. In particular, we have that $x_k\in W_{\max}^{-1}k+Q_{4/W_{\max}}$. Therefore, for every $k\in \mathcal{K}_{W_{\max}}$ we get by regularity of $\partial\ee$
	\begin{align*}
		\kappa_{\partial\ee}\cdot \min\{W_{\max}^{-1},\eta_{\partial\ee}\}^{d-1} &\le \H^{d-1}\big( \partial \ee\cap B_{1/W_{\max}}(x_k)\big)
		\\
		&\le \H^{d-1}\big( \partial \ee\cap x_k+Q_{2 /W_{\max}} \big)
		\\ &\le \H^{d-1}\big( \partial \ee\cap W_{\max}^{-1}k+Q_{6 /W_{\max}} \big). \notag
	\end{align*}
So,
	\begin{align*}
		\kappa_{\partial\ee}\cdot \min\left\{W_{\max}^{-1},\eta_{\partial\ee}\right\}^{d-1}\cdot\#(\mathcal{K}_{W_{\max}})&\leq 
		\sum_{k\in\mathcal{K}_{W_{\max}}}  \H^{d-1}\big( \partial \ee\cap W_{\max}^{-1}k+Q_{6 /W_{\max}} \big)
		\\&\lesssim \sum_{k\in\Z^d}  \H^{d-1}\big( \partial \ee\cap W_{\max}^{-1}k+Q_{1 /W_{\max}} \big)\\ &=\H^{d-1}( \partial \ee). 
	\end{align*}
	Plugging this estimate into \eqref{eq:pl} completes the proof.
\end{proof}

Next, for a compact domain $E \subseteq \mathbb{R}^d$ and a parameter $s \geq 1$ we recall the sets \eqref{eq_set1}, introduce a second auxiliary parameter $0<\delta<1$, and define the following covering of $\Z^{2d}$: 
\begin{align}\label{eq_set2}
\begin{aligned}
	\Gamma^{\text{low}}&:=\big\{(j,k):\ \min_{i }| D_{j_i}|\geq \delta,\ k\in\mathcal{L}_j^{\text{low}}\big\};
	\\
	\Gamma^{\text{med}}&:=\big\{(j,k):\ \min_{i }| D_{j_i}|\geq \delta,\ k\in\mathcal{L}_j^{\text{med}}\big\};
	\\
	\Gamma^{\text{high}}&:=\big\{(j,k):\ \min_{i }| D_{j_i}|\geq \delta,\ k\in\mathcal{L}_j^{\text{high}}\big\}\cup \big\{(j,k):\ \min_{i }| D_{j_i}|< \delta,\ k\in\Z^d\big\}.
\end{aligned}
\end{align}

\begin{lemma}\label{lem:gamma-med}
	Under the conditions of Lemma~\ref{lem:L}, let $0<\delta<1$ and consider the set $\Gamma^{\emph{med}}$ from \eqref{eq_set2}. Then
	\[\#(\Gamma^{\emph{med}})\lesssim \max\{W_{\max},1/\eta_{\partial\ee}\}^{d-1} \cdot \frac{|\partial \ee|}{\kappa_{\partial\ee}} \cdot
	\max\{\log(W_{\max}/\delta),1\}^d
	\cdot s^d .\]
\end{lemma}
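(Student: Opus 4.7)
The plan is to decompose the count $\#(\Gamma^{\text{med}})$ as a sum over admissible $j$'s of the cardinalities $\#\mathcal{L}_j^{\text{med}}$, bound each inner cardinality uniformly in $j$ using Lemma~\ref{lem:L} via the inclusion \eqref{eq:lj}, and then separately estimate the number of admissible $j$'s by exploiting the geometric decay of $|D_j|$ in $|j|$.

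First, by the definition of $\Gamma^{\text{med}}$ in \eqref{eq_set2} and inclusion \eqref{eq:lj},
\begin{equation*}
\#(\Gamma^{\text{med}}) \;\leq\; \#\bigl\{j\in\Z^d : \min_{i}|D_{j_i}|\geq \delta\bigr\} \cdot \sup_{j\in\Z^d}\#\bigl\{k\in\Z^d : \text{dist}(k, M_j\partial\ee_L) < s\bigr\}.
\end{equation*}
Lemma~\ref{lem:L} bounds the supremum above by a constant times $\max\{W_{\max},1/\eta_{\partial \ee}\}^{d-1}\cdot |\partial\ee|/\kappa_{\partial\ee}\cdot s^d$, which already matches the geometric and spectral factors claimed in the lemma. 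This reduces the problem to counting the admissible index tuples $j$.

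Next, I would analyze the one-dimensional sizes $|D_{j_i}|$ generated by the dyadic decomposition of $(-W_i/2,W_i/2)$. Since $|I_j|=W_i/(3\cdot 2^{|j|})$ by construction, a direct computation gives $|D_{j_i}| \asymp W_i \cdot 2^{-|j_i|}$, so the requirement $|D_{j_i}|\geq \delta$ is equivalent (up to a universal constant) to $|j_i|\leq \log_2(W_i/\delta) + O(1)$. The number of integers $j_i$ meeting this condition is therefore $\lesssim \max\{\log(W_i/\delta),1\} \leq \max\{\log(W_{\max}/\delta),1\}$, and taking the product over the $d$ coordinates gives
\begin{equation*}
\#\bigl\{j\in\Z^d : \min_{i}|D_{j_i}|\geq \delta\bigr\} \;\lesssim\; \max\{\log(W_{\max}/\delta),1\}^d.
\end{equation*}
Combining the two estimates yields precisely the bound stated in Lemma~\ref{lem:gamma-med}.

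I do not anticipate a serious obstacle here: Lemma~\ref{lem:L} handles the boundary regularity cost in a way already tailored to the matrices $M_j$, so the remaining work is purely combinatorial counting in the dyadic index set. The only subtle point is to correctly handle the regime $W_{\max}/\delta < 1$ (where the logarithm could be non-positive), which is absorbed by the $\max\{\cdot,1\}$ in the final bound, and to verify that the constants hidden in $|D_{j_i}| \asymp W_i 2^{-|j_i|}$ are indeed universal so the final implicit constant depends only on $d$.
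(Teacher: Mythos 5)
Your proposal is correct and follows essentially the same route as the paper: bound each $\#\mathcal{L}_j^{\text{med}}$ uniformly in $j$ via the inclusion \eqref{eq:lj} and Lemma~\ref{lem:L}, then count the admissible indices $j$ coordinatewise, using $|D_{j_i}|\asymp W_i 2^{-|j_i|}$ to get at most $C\max\{\log(W_{\max}/\delta),1\}$ choices per coordinate. The handling of the case $W_{\max}/\delta<1$ via the $\max\{\cdot,1\}$ is also as in the paper's argument.
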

\begin{proof}
	By \eqref{eq:lj} and Lemma~\ref{lem:L} it follows
	$$
	\#(\Gamma^{\text{med}})= \sum_{\substack{j\in\Z^d\\ \min|  D_{j_i}|\geq \delta}}\#(\mathcal{L}_j^{\text{med}})\lesssim \sum_{\substack{j\in\Z^d\\ \min|  D_{j_i}|\geq \delta}}\max\{W_{\max},1/\eta_{\partial\ee}\}^{d-1} \frac{|\partial \ee|}{\kappa_{\partial\ee}} s^d.
	$$
	In each coordinate, we have that the number of intervals $  D_{j_i}$ for which $|  D_{j_i}|\geq \delta $ is bounded by
	$C\max\{\log(W_{\max}/\delta),1\}$. Hence, we arrive at
	\[
	\#(\Gamma^{\text{med}})\lesssim \max\{W_{\max},1/\eta_{\partial\ee}\}^{d-1}  \frac{|\partial \ee|}{\kappa_{\partial\ee}} 
	\max\{\log(W_{\max}/\delta),1\}^d
	s^d.\qedhere\]
\end{proof}

\begin{lemma} \label{lem:est-gamma}
	Let $d\geq 2$, $L\ge \max\{W_{\max},1\},$ and $\ee\subseteq \R^d$ be a compact domain with regular boundary at scale $\eta_{\partial\ee}\geq 1$ with constant $\kappa_{\partial\ee}$ and  such that $|\partial E|\ge 1$. Let $s\ge 1$ and $\delta \in (0,1)$ be parameters and consider the sets from \eqref{eq_set2}. Then
	there exists a constant $c=c_\alpha>0$ such that 
	\begin{align}\label{eq:est-gamma-low}
		\sum_{(j,k)\in\Gamma^{\emph{low}}} L^{-d} &\sum_{m\in\ee^c_L}|\mathcal{F}(\Phi_{j,k})(m)|^2  
	\notag	\\ &\lesssim   \max\{W_{\max},1/\eta_{\partial\ee}\}^{d-1} \cdot\frac{|\partial \ee| }{\kappa_{\partial\ee}}\cdot\exp\big(-cs^{1-\alpha}\big)
		\max\{\log(W_{\max}/\delta),1\}^d,
	\end{align}
	and
	\begin{align}\label{eq:est-gamma-high}
		\sum_{(j,k)\in\Gamma^{\emph{high}}} L^{-d} \sum_{m\in\ee_L}|\mathcal{F}(\Phi_{j,k})(m)|^2 \lesssim&    \frac{\max\{W_{\max}, 1/\eta_{\partial\ee}\}^{d-1}}{\kappa_{\partial\ee}}\cdot\Big(|\partial\ee|^{\frac{d}{d-1}} \cdot\delta \notag
		\\
		& + |\partial \ee| \cdot \exp\big(-cs^{1-\alpha}\big)
		\cdot\max\{\log(W_{\max}/\delta),1\}^d
		 \Big).
	\end{align}
\end{lemma}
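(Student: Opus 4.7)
The starting point is the pointwise bound \eqref{eq:decay}, which yields
\[
|\mathcal{F}(\Phi_{j,k})(m)|^2 \le A_\alpha^{2d}\det(M_j)\exp\bigl(-2a_\alpha|M_j m-k|^{1-\alpha}\bigr).
\]
Since $L\ge W_{\max}\ge|D_{j_i}|$, the lattice $M_j L^{-1}\Z^d$ has coordinate spacing $|D_{j_i}|/L\le 1$ and cell volume $\det(M_j)/L^d$. A Riemann-sum comparison with $\int \exp(-2a_\alpha|y|^{1-\alpha})\,dy$ therefore gives, for every $r\ge 1$ and every $k$,
\[
L^{-d}\det(M_j)\!\!\sum_{y\in M_j L^{-1}\Z^d,\,|y-k|\ge r}\!\!\exp(-2a_\alpha|y-k|^{1-\alpha}) \;\lesssim\; \exp(-c_\alpha r^{1-\alpha}),
\]
where $c_\alpha\in(0,2a_\alpha)$ absorbs the polynomial prefactor $r^{d-1}$. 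This is my main quantitative tool.

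For \eqref{eq:est-gamma-low} I fix $(j,k)\in\Gamma^{\text{low}}$ and set $r(k):=\text{dist}(k,M_j E_L^c)\ge s$. For $m\in E_L^c$ one has $M_j m\in M_j E_L^c$, so $|M_j m-k|\ge r(k)$, and the bound above yields $L^{-d}\sum_{m\in E_L^c}|\mathcal{F}\Phi_{j,k}(m)|^2\lesssim\exp(-c_\alpha r(k)^{1-\alpha})$. I then decompose $\mathcal{L}_j^{\text{low}}$ into layers $B_n^j=\{k\in\mathcal{L}_j^{\text{low}}:r(k)\in[n,n+1)\}$, $n\ge s$. A geometric argument --- walk along a coordinate lattice path from a witness $v^*\in M_j E_L^c$ realizing $r(k)$ towards $k$; the first transition from $M_jE_L^c$ to $M_jE_L$ along the path produces a point of $M_j\partial E_L$ within distance $\lesssim n$ of $k$, using only the spacing bound $|D_{j_i}|/L\le 1$ --- shows $B_n^j\subseteq\{k\in\Z^d:\text{dist}(k,M_j\partial E_L)\le C_d n\}$. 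Lemma~\ref{lem:L} then bounds $\#B_n^j\lesssim \max\{W_{\max},1/\eta_{\partial E}\}^{d-1}(|\partial E|/\kappa_{\partial E})\,n^d$. Summing the convergent tail $\sum_{n\ge s}n^d\exp(-c_\alpha n^{1-\alpha})\lesssim \exp(-c s^{1-\alpha})$ (absorbing the polynomial into a slightly weaker exponent) controls the contribution of a single $j$, and the number of admissible $j$ with $\min_i|D_{j_i}|\ge\delta$ is $\lesssim\max\{\log(W_{\max}/\delta),1\}^d$; this yields \eqref{eq:est-gamma-low}.

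For \eqref{eq:est-gamma-high} I split $\Gamma^{\text{high}}$ into its two defining pieces. On the piece with $\min_i|D_{j_i}|\ge\delta$ and $k\in\mathcal{L}_j^{\text{high}}$, the argument mirrors the low-frequency one with $E_L^c$ replaced by $E_L$ and $r(k)=\text{dist}(k,M_jE_L)$, producing the term $|\partial E|\exp(-cs^{1-\alpha})\max\{\log(W_{\max}/\delta),1\}^d$. On the piece with $\min_i|D_{j_i}|<\delta$ and arbitrary $k$, a Poisson-summation calculation yields $\sum_{k\in\Z^d}\Phi_{j,k}\otimes\Phi_{j,k}^* = 2^d M_{\prod_i\theta_{j_i}^2}$ as operators on $L^2$ of the frequency box; taking trace against $P_{E,L}$ in the frequency ONB $\{L^{-d/2}e^{2\pi i m\cdot x}\}_{m\in L^{-1}\Z^d}$ produces the closed form $\sum_k\|P_{E,L}\Phi_{j,k}\|^2 = L^{-d}\#E_L\cdot\det(M_j)$ (using $\int \prod_i\theta_{j_i}^2=\det(M_j)/2^d$). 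Corollary~\ref{cor:coar} gives $L^{-d}\#E_L\lesssim|\partial E|^{d/(d-1)}/\kappa_{\partial E}$ for $d\ge 2$, and summing $\det(M_j)=\prod_i|D_{j_i}|$ over $j$ with at least one coordinate satisfying $|D_{j_i}|<\delta$ is a geometric series bounded by $C_d\,\delta\,W_{\max}^{d-1}$, producing the $|\partial E|^{d/(d-1)}\delta$ term.

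The principal obstacle is the geometric containment $B_n^j\subseteq\{k:\text{dist}(k,M_j\partial E_L)\le C_d n\}$: one must translate a (real) distance from the discrete set $M_jE_L^c$ into a distance from the discrete boundary $M_j\partial E_L$, and the lattice-path argument, while natural, depends delicately on the spacing bound $|D_{j_i}|/L\le 1$ (and on $s\gtrsim\sqrt d$, which can be arranged by constants). A secondary technical point is verifying the Riemann-sum comparison uniformly in $j$, so that the implicit constants depend only on $d$ and $\alpha$.
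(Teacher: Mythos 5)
Your proposal is correct and follows essentially the same route as the paper: the decay bound \eqref{eq:decay} plus a lattice-to-integral comparison, a decomposition of $\mathcal{L}_j^{\text{low/high}}$ into distance layers contained in a neighborhood of $M_j\partial\ee_L$ (the paper's \eqref{eq:lj} path argument, reused here), counting via Lemma~\ref{lem:L} and the $\max\{\log(W_{\max}/\delta),1\}^d$ bound on admissible $j$, and, for the thin-interval piece, a bound of the form $L^{-d}\#\ee_L\det(M_j)$ combined with Corollary~\ref{cor:coar} and the geometric sum $\lesssim \delta W_{\max}^{d-1}$. The only differences are cosmetic: you use unit-width layers where the paper uses dyadic ones, and you get the thin-interval term from an exact fixed-$j$ Parseval identity where the paper simply bounds $\sum_k\exp(-2a_\alpha|M_jm-k|^{1-\alpha})$ uniformly; the lattice-path containment you flag as delicate is exactly the argument the paper sketches for \eqref{eq:lj} and invokes again here.
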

\proof 
For $j\in\Z^d $ and $l\in\N_0$ we set 
\begin{equation*}
	\mathcal{L}_{j,l}^{\text{low}}=\big\{k\in\Z^d:\   \text{dist}(k,M_j\ee_L^c)\in [s2^l,s2^{l+1})  \big\},
\end{equation*}
and
\begin{equation*}
	\mathcal{L}_{j,l}^{\text{high}}=\big\{k\in\Z^d:\   \text{dist}(k,M_j \ee_L)\in [s2^l,s2^{l+1})  \big\}.
\end{equation*}
Notice that
\begin{align*}
	\mathcal{L}_{j,l}^{\text{low}}\cup \mathcal{L}_{j,l}^{\text{high}}&\subseteq \big\{k\in\Z^d:\ \text{dist}(k,M_j\ee_L)< s2^{l+1} \text{, and } \text{dist}(k,M_j\ee_L^c)< s2^{l+1}\}
	\\ &\subseteq\big\{k\in\Z^d:\ \text{dist}(k,M_j\partial\ee_L)< s2^{l+1}\},
\end{align*}
where the last step follows as in \eqref{eq:lj}. From Lemma \ref{lem:L} we get
\begin{equation}\label{eq:counting}
	\#(\mathcal{L}_{j,l}^{\text{low}})
	,\#(\mathcal{L}_{j,l}^{\text{high}})\lesssim \max\{W_{\max},1/\eta_{\partial\ee}\}^{d-1} \frac{|\partial \ee| }{\kappa_{\partial\ee}  }  s^d2^{dl}.
\end{equation}
From \eqref{eq:decay} it follows that if $k\in\mathcal{L}_{j,l}^{\text{low}}$
\begin{align}
	L^{-d}\sum_{m\in\ee_L^c}|\mathcal{F}(\Phi_{j,k})(m)|^2&\leq  L^{-d} \sum_{m\in\ee_L^c} A_\alpha^{2d}\det(M_j)\exp\big(-2a_\alpha|M_j(m-\xi_{j,k})|^{1-\alpha}\big)\nonumber
	\\
	&\leq A_\alpha^{2d}  L^{-d}\det(M_j) \sum_{m'\in M_j \ee_L^c } \exp\big(-2a_\alpha|m'-k|^{1-\alpha}\big)\nonumber
	\\
	&\lesssim   \int_{\{|x|\ge s2^l\}} \exp\big(-2a_\alpha|x|^{1-\alpha}\big)dx\nonumber
	\\
	&\lesssim    \exp\big(-c(s2^l)^{1-\alpha}\big),\label{eq:est-L-low}
\end{align}
where $c$ can for example be chosen as $a_\alpha$. 
A similar argument also shows that for $k\in \mathcal{L}_{j,l}^{\text{high}}$,
\begin{equation*}\label{eq:est-L-high}
	L^{-d} \sum_{m\in\ee_L}|\mathcal{F}(\Phi_{j,k})(m)|^2\lesssim  \exp\big(-c(s2^l)^{1-\alpha}\big).
\end{equation*}
As $\mathcal{L}_j^{\text{low}}=\bigcup_{l\in\N_0} \mathcal{L}_{j,l}^{\text{low}}$, it follows from \eqref{eq:counting} and \eqref{eq:est-L-low} that
\begin{multline*}
	\sum_{(j,k)\in\Gamma^{\text{low}}}L^{-d}\sum_{m\in\ee_L^c}|\mathcal{F}(\Phi_{j,k})(m)|^2= \sum_{\substack{j\in\Z^d\\ \min|  D_{j_i}|\geq \delta}}\sum_{l\in\N_0}\sum_{k\in\mathcal{L}_{j,l}^{\text{low}}}L^{-d}\sum_{m\in\ee_L^c}|\mathcal{F}(\Phi_{j,k})(m)|^2
	\\
	\begin{aligned}
		&\lesssim \max\{W_{\max},1/\eta_{\partial\ee}\}^{d-1} \frac{|\partial \ee| }{\kappa_{\partial\ee}  } \sum_{\substack{j\in\Z^d\\ \min|  D_{j_i}|\geq \delta}}\sum_{l\in\N_0}
		(s 2^{l})^d  \exp\big(-c(s2^l)^{1-\alpha}\big)
		\\
		&\lesssim \max\{W_{\max},1/\eta_{\partial\ee}\}^{d-1} \frac{|\partial \ee| }{\kappa_{\partial\ee}  }\sum_{\substack{j\in\Z^d\\ \min|  D_{j_i}|\geq \delta}}\exp\big(-c's^{1-\alpha}\big)
		\\
		&\lesssim  \max\{W_{\max},1/\eta_{\partial\ee}\}^{d-1} \frac{|\partial \ee| }{\kappa_{\partial\ee}  }\exp\big(-c's^{1-\alpha}\big)\max\{\log(W_{\max}/\delta),1\}^d,
	\end{aligned}
\end{multline*}
which completes the proof of \eqref{eq:est-gamma-low}.
Again, we can use an analogous reasoning to show
\begin{multline*}
	\sum_{\substack{j\in\Z^d\\ \min|  D_{j_i}|\geq \delta}} \sum_{k\in\mathcal{L}_{j}^{\text{high}}}L^{-d}\sum_{m\in \ee_L}|\mathcal{F}(\Phi_{j,k})(m)|^2
	\\ 
	\lesssim  \max\{W_{\max},1/\eta_{\partial\ee}\}^{d-1} \frac{|\partial \ee| }{\kappa_{\partial\ee}  }\exp\big(-c's^{1-\alpha}\big)\max\{\log(W_{\max}/\delta),1\}^d.
\end{multline*}
Now suppose that $j\in\Z^d$ is such that $\min_{1\le i \le d}|  D_{j_i}|<\delta$.
For every $m\in\Z^d$ we can uniformly bound the following series
$$
\sum_{k\in\Z^d}\exp\big(-2a_\alpha |M_j(m-\xi_{j,k})|^{1-\alpha}\big)=\sum_{k\in\Z^d}\exp\big(-2a_\alpha |M_jm-k|^{1-\alpha}\big)\leq C. 
$$
Since $\det(M_j)=|D_j|$, where $D_j=D_{j_1}\times...\times D_{j_d}$, we thus get by \eqref{eq:decay}
\begin{align*}
	\sum_{\substack{j\in\Z^d\\ \min|  D_{j_i}|< \delta}}\sum_{k\in\Z^d} L^{-d}\sum_{m\in\ee_L} \left|\mathcal{F}(\Phi_{j,k})(m)\right|^2
	&\leq C \sum_{\substack{j\in\Z^d\\ \min|  D_{j_i}|< \delta}} L^{-d}\sum_{m\in\ee_L} \text{det}(M_j)
	\\
	&\leq C L^{-d}\#\ee_L\sum_{\substack{j\in\Z^d\\ \min|  D_{j_i}|< \delta}} |D_j|
	\\
	&\lesssim  \frac{|\partial\ee|^{d/(d-1)}}{\kappa_{\partial\ee}} \sum_{\substack{j\in\Z^d\\ \min|  D_{j_i}|< \delta}} |D_j|,
\end{align*}	
where in the last inequality we used Corollary \ref{cor:coar}.
Finally,
\begin{align*}
	\sum_{\substack{j\in\Z^d\\ \min|  D_{j_i}|< \delta}} |D_j|&\le \sum_{i=1}^d \sum_{\substack{j\in\Z^d\\ |  D_{j_i}|< \delta}} |D_j|
	\le  \sum_{i=1}^d
	 W_{\max}^{d-1} 4\delta
	\\ &\lesssim  \max\{W_{\max},1/\eta_{\partial\ee}\}^{d-1}\delta,
\end{align*}
where we used that each interval $D_{j_i}$ is at most at $|D_{j_i}|<\delta$ distance from the boundary of $(-W_i/2,W_i/2)$ and consequently the union of all such intervals lie in the complement of a rectangle of side-lengths $W_i-4\delta$. This concludes the proof of~\eqref{eq:est-gamma-high}.\pbox

\section{General domain vs. rectangle}\label{sec_gc}

In this section, we prove the following variant of Theorem \ref{th2} for $F$ a rectangle.

\begin{theorem}\label{th:cube}
	Let $L\ge 1$ be a discretization resolution, $d\geq 2$, and $\ee\subseteq\R^d$ be a compact domain with regular boundary at scale $\eta_{\partial\ee}\ge 1$ with constant $\kappa_{\partial\ee}$ and such that $|\partial E|\ge 1$. For $0<W_i\le L$, $i=1,...,d$, take $F=\prod_{i=1}^d(-W_i/2,W_i/2)$ and denote $W_{\max}= \max_i W_i$.
	
	Then for every $\alpha\in(0,1/2)$ there exists $A_{\alpha,d}\geq 1$ such that  for $\varepsilon\in(0,1/2)$:
	\begin{multline*}
		\#\big\{n\in\N:\ \lambda_n\in(\varepsilon,1-\varepsilon)\big\}
		\\ \leq A_{\alpha,d} \cdot \max\{W_{\max},1/\eta_{\partial\ee}\}^{d-1} \cdot \frac{|\partial \ee|}{\kappa_{\partial\ee}  } \cdot \log\left(\frac{  \max\{W_{\max},1/\eta_{\partial\ee}\}^{d-1}|\partial \ee|}{\kappa_{\partial\ee}\ \varepsilon} \right)^{2d(1+\alpha)}.
	\end{multline*}
\end{theorem}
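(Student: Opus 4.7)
The plan is to invoke Lemma~\ref{lem:main} with the tight tensor-product frame $\{\Phi_{j,k}\}_{(j,k)\in\Z^{2d}}$ for $L^2(F)$ constructed in Section~\ref{sec_i}, which has lower frame bound $A=2^d$. The index set $\Z^{2d}$ will be split into the three sets $\Gamma^{\text{low}},\Gamma^{\text{med}},\Gamma^{\text{high}}$ of \eqref{eq_set2} (depending on two parameters $s\ge 1$ and $\delta\in(0,1)$), and these will be identified with Israel's $\I_3,\I_2,\I_1$ respectively.

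\textbf{From energy to eigenvalue count.} Since $\supp(\Phi_{j,k})\subseteq F\subseteq(-L/2,L/2)^d$, we have $\mathcal{F}_L\Phi_{j,k}(m/L)=\mathcal{F}\Phi_{j,k}(m/L)$, and Plancherel together with $\chi_F\Phi_{j,k}=\Phi_{j,k}$ gives
\begin{align*}
\|T\Phi_{j,k}\|^2 \le L^{-d}\sum_{m\in \ee_L} |\mathcal{F}\Phi_{j,k}(m)|^2,\qquad
\|(I-T)\Phi_{j,k}\|^2 \le L^{-d}\sum_{m\in \ee_L^c} |\mathcal{F}\Phi_{j,k}(m)|^2.
\end{align*}
For $(j,k)\in\Gamma^{\text{low}}$, the point $\xi_{j,k}$ is far from $M_j\ee_L^c$, so the Gaussian-like decay in \eqref{eq:decay} makes $\|(I-T)\Phi_{j,k}\|^2$ small; symmetrically, for $(j,k)\in\Gamma^{\text{high}}$ (including the degenerate band $\min_i|D_{j_i}|<\delta$), it makes $\|T\Phi_{j,k}\|^2$ small. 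Both contributions are precisely the ones controlled by Lemma~\ref{lem:est-gamma}.

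\textbf{Calibration of $\delta$ and $s$.} Write $P=\max\{W_{\max},1/\eta_{\partial\ee}\}^{d-1}$ and $\Lambda_0=\log\big(P|\partial\ee|/(\kappa_{\partial\ee}\varepsilon)\big)$. The hypothesis \eqref{eq:eps-cubed} of Lemma~\ref{lem:main} requires both the term $P|\partial\ee|^{d/(d-1)}\delta/\kappa_{\partial\ee}$ of \eqref{eq:est-gamma-high} and the exponential factors in both inequalities of Lemma~\ref{lem:est-gamma} to be $\lesssim \varepsilon^2$. The first is handled by choosing
\begin{align*}
\delta\asymp \frac{\kappa_{\partial\ee}\,\varepsilon^2}{P\,|\partial\ee|^{d/(d-1)}},
\end{align*}
which makes $\log(W_{\max}/\delta)\lesssim\Lambda_0$. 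The exponential terms are then forced down by selecting
\begin{align*}
s^{1-\alpha}\asymp \Lambda_0,\qquad\text{i.e.}\qquad s\asymp \Lambda_0^{1/(1-\alpha)}.
\end{align*}
With this choice both sums in \eqref{eq:eps-cubed} are $\lesssim \varepsilon^2$, as needed.

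\textbf{Conclusion via Lemma~\ref{lem:gamma-med}.} Lemma~\ref{lem:gamma-med} yields
\begin{align*}
\#\Gamma^{\text{med}} \lesssim P\cdot\frac{|\partial\ee|}{\kappa_{\partial\ee}}\cdot\Lambda_0^{d}\cdot s^{d}
\lesssim P\cdot\frac{|\partial\ee|}{\kappa_{\partial\ee}}\cdot \Lambda_0^{\,d(2-\alpha)/(1-\alpha)}.
\end{align*}
Lemma~\ref{lem:main} then gives $\#M_\varepsilon(T)\le 2^{1-d}\#\Gamma^{\text{med}}$, which is the desired bound once we observe the elementary inequality
\begin{align*}
\frac{d(2-\alpha)}{1-\alpha}\le 2d(1+\alpha)\iff \alpha(2\alpha-1)\le 0,
\end{align*}
valid precisely for $\alpha\in[0,1/2]$.

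\textbf{Main obstacle.} The delicate point is the joint calibration of $\delta$ and $s$: each parameter controls a different source of error (mass of $\mathcal{F}\Phi_{j,k}$ leaking outside $\ee_L$ for high frequencies $k$, vs.\ the number of ``thin'' multi-indices $j$ with $\min_i|D_{j_i}|<\delta$), and the resulting exponent of $\Lambda_0$ is sharpened by trading one parameter against the other. The arithmetic constraint $\alpha\le 1/2$ encodes exactly this trade-off and explains why one cannot push $\alpha$ past $1/2$ within this scheme.
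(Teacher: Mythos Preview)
Your proposal is correct and follows essentially the same route as the paper's proof: the same tight frame, the same three-way split $\Gamma^{\text{low}},\Gamma^{\text{med}},\Gamma^{\text{high}}$ playing the roles of $\I_3,\I_2,\I_1$, the same reduction to the energy estimates of Lemma~\ref{lem:est-gamma}, and the same calibration of $\delta$ and $s$ leading to the exponent $d+d/(1-\alpha)=d(2-\alpha)/(1-\alpha)\le 2d(1+\alpha)$. The only points the paper makes explicit that you leave implicit are the verifications that $\delta<1$ and $s\ge1$ (both following from \eqref{eqone}) and that the implied constants can be absorbed so that \eqref{eq:eps-cubed} holds with the exact bound $\tfrac{A}{2}\varepsilon^2=2^{d-1}\varepsilon^2$ rather than merely $\lesssim\varepsilon^2$; these are routine.
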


\begin{proof}
	We adopt all the notation of Section \ref{sec_i}. Fix parameters $s \geq 1$, $\delta \in (0,1)$ and consider the sets from \eqref{eq_set2}.
		
	Observe that for $f\in L^2(F)$ one has
	$$
	\|Tf\|_2^2=\|\chi_{F}P_{\ee,L} f\|_2^2\leq \|P_{\ee,L} f\|_2^2=L^{-d}\sum_{m\in \ee_L}\big|\widehat{f}(m)\big|^2,
	$$
	and
	$$
	\|f-Tf\|_2^2=\|\chi_{F}f-\chi_{F} P_{\ee,L} f\|_2^2\leq \|(I-P_{\ee,L}) f\|_2^2=L^{-d}\sum_{m\in \ee_L^c}\big|\widehat f(m)\big|^2.
	$$
	By Lemma~\ref{lem:est-gamma} it thus follows 
	\begin{align}
	\label{eqeps1}
		\sum_{(j,k)\in\Gamma^{\text{low}}}&\|(I-T)\Phi_{j,k}\|_2^2+\sum_{(j,k)\in\Gamma^{\text{high}}}\|T\Phi_{j,k}\|_2^2
		\\
		&\leq C     \frac{\max\{W_{\max},1/\eta_{\partial\ee}\}^{d-1}}{\kappa_{\partial\ee}} \Big(|\partial \ee| \exp\big(-cs^{1-\alpha}\big)\max\{\log(W_{\max}/\delta),1\}^d \notag
		\\ 
		&\hspace{6cm}+ |\partial\ee|^{d/(d-1)}\delta \Big), \notag
	\end{align}
	where the  constants depend only on $\alpha$ and $d$, and $C$ can be taken $\geq 2$.
	
	At last, we can now specify the parameters $\delta$ and $s$ in order for the sets $\Gamma^{\text{low}},\Gamma^{\text{med}}$ and $\Gamma^{\text{high}}$ to play the role of $\mathcal{I}_1,\mathcal{I}_2$ and $\mathcal{I}_3$ in Lemma \ref{lem:main}. We take 
	\begin{align*}
		\delta =\frac{\kappa_{\partial\ee}\ \varepsilon^2 }{C\  \max\{W_{\max},1/\eta_{\partial\ee}\}^{d-1} |\partial\ee|^{d/(d-1)}}.
	\end{align*}
	This ensures that 
	\begin{align}
	\label{eqeps2}
	  \frac{C \max\{W_{\max},1/\eta_{\partial\ee}\}^{d-1} |\partial\ee|^{d/(d-1)}}{\kappa_{\partial\ee}}\delta \leq \varepsilon^2.
	  \end{align}
 while \eqref{eqone} shows that $\delta \in (0,1)$.
 
In addition, we shall select $s$ such that
	\begin{align}
	\label{eqeps3}
		C  \max\{W_{\max},1/\eta_{\partial\ee}\}^{d-1} \frac{|\partial \ee| }{\kappa_{\partial\ee} }\exp\big(-cs^{1-\alpha}\big)\max\{\log(W_{\max}/\delta),1\}^d\leq \varepsilon^2.
	\end{align}
	This condition on $s$ is equivalent to
	\begin{align}\label{eqs}
	    s\hspace{-1pt}\geq \hspace{-1pt}\left(\hspace{-1pt}\frac{1}{c}\log\hspace{-2pt}\left(\frac{C \max\{W_{\max},1/\eta_{\partial\ee}\}^{d-1} |\partial \ee| \max\{\log(W_{\max}/\delta),1\}^d}{\kappa_{\partial\ee}\    \varepsilon^2}\right)\hspace{-2pt}\right)^{\hspace{-2pt}1/(1-\alpha)}.
	\end{align}
 Denote 
 \[u=\max\{W_{\max},1/\eta_{\partial\ee}\}^{d-1}|\partial\ee|/\kappa_{\partial\ee},\]
 which satisfies $u\ge 1$ by \eqref{eqone}, and note that
\begin{align}
    \label{eqdel}
    \log(W_{\max}/\delta) \le \log \Big(\frac{C \kappa_{\partial\ee}^{1/(d-1)} u^{d/(d-1)} }{ \varepsilon^2} \Big)  \lesssim  \log (u/\varepsilon),
\end{align}
where in the last step we used \eqref{eqkappa}.
So, we can bound the right-hand side of \eqref{eqs} by
\[\Big(\frac{1}{c}\log\Big( \frac{C' u \log(u/\varepsilon)^d}{\varepsilon^2}\Big)\Big)^{1/(1-\alpha)}\lesssim \log(u/\varepsilon)^{1/(1-\alpha)}.\]
In particular, \eqref{eqs} is satisfied if 
	\begin{align}\label{eq_ps}
	s= A_{\alpha,d}  \log\left(\frac{  \max\{W_{\max},1/\eta_{\partial\ee}\}^{d-1}|\partial \ee|}{\kappa_{\partial\ee} \ \varepsilon} \right)^{1/(1-\alpha)},
	\end{align}
	for an adequate constant $A_{\alpha,d}$. Moreover, we can guarantee that $s\ge 1$, since, by \eqref{eqone}, the term inside the logarithm in \eqref{eq_ps} is $\ge 2$. From \eqref{eqeps1}, \eqref{eqeps2}, \eqref{eqeps3}, Lemma \ref{lem:main} and Lemma~\ref{lem:gamma-med},
	\begin{align*}
		\# M_\varepsilon(T)&\leq 2^{1-d}\#(\Gamma^{\text{med}}) \\
		&\lesssim \max\{W_{\max},1/\eta_{\partial\ee}\}^{d-1} \frac{|\partial \ee|}{\kappa_{\partial\ee}  } \max\{\log(W_{\max}/\delta),1\}^d s^d
		\\
		&\lesssim  \max\{W_{\max},1/\eta_{\partial\ee}\}^{d-1} \frac{|\partial \ee|}{\kappa_{\partial\ee}  } \log\left(\frac{  \max\{W_{\max},1/\eta_{\partial\ee}\}^{d-1}|\partial \ee|}{\kappa_{\partial\ee} \ \varepsilon} \right)^{d/(1-\alpha)+d}
		\\
		&\lesssim  \max\{W_{\max},1/\eta_{\partial\ee}\}^{d-1} \frac{|\partial \ee|}{\kappa_{\partial\ee}  } \log\left(\frac{  \max\{W_{\max},1/\eta_{\partial\ee}\}^{d-1}|\partial \ee|}{\kappa_{\partial\ee} \ \varepsilon} \right)^{2d(1+\alpha)},
	\end{align*}
 where in the third step we used \eqref{eqdel} and the definition of $s$.
\end{proof}

\section{Eigenvalue estimates for two  domains}\label{sec_step2}

\subsection{Schatten quasi-norm estimates}

For $0<p\le 1$, and $\varepsilon>0$, define the auxiliary function $g=g_{p,\varepsilon}:[0,1]\to\R$ given by
\[g(t)=\Big(\frac{t-t^2}{\varepsilon-\varepsilon^2}\Big)^p.\]
Note that since $\chi_{(\varepsilon,1-\varepsilon)}\le g$, for a positive operator $0\le S\le 1$,
\begin{align}\label{eq_lp}
\# M_\varepsilon(S)=\tr(\chi_{(\varepsilon,1-\varepsilon)} (S))\le \tr(g(S))=\frac{\|S-S^2\|_p^p}{(\varepsilon-\varepsilon^2)^p},
\end{align}
where $\|\cdot\|_p$, $0<p\le 1$, denotes the Schatten quasi-norm.
The next lemma shows that upper bounds for the left-hand side of the last inequality can be transferred to the right-hand side without much loss.

\begin{lemma}
	\label{lem:schp}
	Suppose that for a positive operator $0\le S\le 1$ there are constants $C,D,a>0$ such that for every $\varepsilon\in (0,1/2)$,
	\[\# M_\varepsilon(S)\le C\big(D+\log(\varepsilon^{-1})\big)^a.\]
	Then, for every $0<p\le 1$ there is a constant $C_a>0$ such that
	\[\|S-S^2\|_p^p\le C_a C\big(D+p^{-1}\big)^a.\]
\end{lemma}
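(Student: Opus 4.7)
The plan is to translate the assumed counting bound on $\# M_\varepsilon(S)$ into an $L^p$-type integral and evaluate it explicitly.

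Since $S$ is compact, positive and $0 \le S \le 1$, the operator $S - S^2 = S^{1/2}(I-S)S^{1/2}$ is positive and compact with eigenvalues $\lambda_n(1-\lambda_n) \ge 0$. Hence
\[
\|S - S^2\|_p^p = \sum_n \bigl(\lambda_n(1-\lambda_n)\bigr)^p.
\]
Setting $\mu_n := \min(\lambda_n, 1-\lambda_n) \in [0,1/2]$, one has $\lambda_n(1-\lambda_n) \le \mu_n$, so it suffices to bound $\sum_n \mu_n^p$. The crucial observation is that $\mu_n > t$ if and only if $\lambda_n \in (t, 1-t)$, so $\#\{n : \mu_n > t\} = \# M_t(S)$. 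By the hypothesis, this is at most $C(D + \log(1/t))^a$ for $t \in (0, 1/2)$, and it vanishes for $t \ge 1/2$.

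Next I apply the layer-cake formula to the non-negative sequence $\mu_n$:
\[
\sum_n \mu_n^p = p \int_0^{1/2} t^{p-1}\, \#\{n : \mu_n > t\}\, dt \le Cp \int_0^{1/2} t^{p-1} \bigl(D + \log(1/t)\bigr)^a\, dt.
\]
The substitutions $u = \log(1/t)$ followed by $w = p(u - \log 2)$ convert this integral into
\[
C\cdot 2^{-p} \int_0^\infty e^{-w}\bigl(D + w/p + \log 2\bigr)^a\, dw.
\]
Using $(x+y)^a \le 2^a(x^a + y^a)$ and $\int_0^\infty w^a e^{-w}\, dw = \Gamma(a+1)$, this is $\lesssim_a C\bigl((D+\log 2)^a + p^{-a}\bigr)$. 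A case analysis on whether $D \le p^{-1}$ or $D \ge p^{-1}$ (using that $p^{-1} \ge 1$ for $p \le 1$) shows that $(D+\log 2)^a + p^{-a} \lesssim_a (D + p^{-1})^a$, which is the desired conclusion.

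There is no serious obstacle; the entire argument is a careful bookkeeping of the layer-cake integral. The only conceptual point is that the factor $t^{p-1}$ converts the $\log(1/\varepsilon)$-type dependence in the counting hypothesis into the $p^{-1}$-type dependence in the Schatten bound, which is exactly the trade-off encoded in the statement. I would also remark at the end that the bound is essentially tight, in the sense that if $\# M_\varepsilon(S) \asymp \log(1/\varepsilon)^a$, then $\|S-S^2\|_p^p \asymp p^{-a}$ by the same calculation applied as a lower bound, so no improvement in the exponent of $p^{-1}$ is possible by this method.
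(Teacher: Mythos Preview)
Your proof is correct and follows essentially the same route as the paper's: both bound $\lambda(1-\lambda)$ by $\min(\lambda,1-\lambda)$, apply the layer-cake formula to express $\|S-S^2\|_p^p$ as $p\int_0^{1/2} t^{p-1}\,\# M_t(S)\,dt$, and then substitute to reduce to a Gamma-type integral. The only cosmetic differences are in how the layer-cake inequality is reached (the paper writes $h(x)^p=\int_0^{\min(x,1-x)} (h^p)'(t)\,dt$ and bounds the integrand pointwise) and in the final splitting of the resulting integral.
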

\begin{proof}
	By the symmetry of the function $h(x)=x-x^2$ around $1/2$, for $0\le x \le 1$,
	\begin{align*}
		h(x)^p&= \int_0^{\min\{x,1-x\}} (h^p)'(t) dt = \int_0^{1/2} \chi_{(t,1-t)}(x) ph^{p-1}(t) h'(t) dt
		\\& \le \int_0^{1/2} \chi_{(t,1-t)}(x) p t^{p-1} dt.
	\end{align*}
	By a monotone convergence argument we get
	\begin{align*}
		\|S-S^2\|_p^p &\le  \int_0^{1/2} M_t(S) p t^{p-1} dt \le C \int_0^{1} (D+\log(t^{-1}))^a  p t^{p-1} dt
		\\ &= C \int_0^{\infty} (D+u/p)^a   e^{-u} du 
		\\ &\le C (D+1/p)^a + C \int_1^{\infty} (D+u/p)^a   e^{-u} du
		\\ &\le C (D+1/p)^a + C (D+1/p)^a \int_1^{\infty} u^a   e^{-u} du
		\\ &\le(1+\Gamma(a+1)) C (D+1/p)^a. \qedhere
	\end{align*}
\end{proof}

\subsection{Decomposition of the domain and Hankel operators}\label{sec_dec}

In what follows, we let $F\subseteq (-L/2,L/2)^d$ be a compact domain with regular boundary at scale $\eta_{\partial F}=|\partial F|^{1/(d-1)}\ge 1$  with constant $\kappa_{\partial F}$. We also assume that $\mathcal{H}^d(\partial F)=0$ (as otherwise the bounds that we shall derive are trivial). We construct two auxiliary sets $F^-\subseteq F \subseteq F^+$ which will be dyadic approximations of $F$ from the inside and outside by cubes of side-length at least $1$. More precisely, let
\[F=\bigcup_{k\in \Z} \bigcup_{j\in J_k} Q_{k,j},
\qquad \mbox{(up to a null measure set)}
\]
be a dyadic decomposition of $F$ in pairwise disjoint cubes of the form $Q_{k,j}=Q_{2^{k}}+2^kj$ with $k\in\Z$ and $j\in J_k\subseteq \Z^d$, that are maximal (i.e., they are not contained in a larger dyadic cube included in $F$). We define
\[F^-= \bigcup_{k\ge 0} \bigcup_{j\in J_k} Q_{k,j}.\]
For $F^+$ we add cubes of length $1$ to fully cover $F$ and intersect them with $(-L/2,L/2)^d$. The result is a covering of $F$ that combines the cubes from $F^-$ with rectangles of maximal side-length $\le 1$. More precisely, define
\[V=\{v\in \Z^d: \ (F\smallsetminus F^-)\cap (Q_1+v)\neq \varnothing\},\]
and
\[F^+=F^- \cup  \bigcup_{v\in V}\Big(( Q_1+v)\cap (-L/2,L/2)^d\Big)=: F^- \cup  \bigcup_{v\in V} R_v.\]
We write $T^\pm$ for $T_{\ee,F^\pm,L}$. 

We shall invoke Theorem \ref{th:cube} and apply it to each rectangle in the decomposition of $F^-$ and $F^+$. This is allowed because translating the rectangles, or removing their null $\mathcal{H}^d$-measure boundaries, does not affect the eigenvalue estimates in question.

For a set $\kk\subseteq (-L/2,L/2)^d$ define the \emph{Hankel operator} on $L^2((-L/2,L/2)^d)$ by \[H_\kk=(I-P_{\ee,L})\chi_{\kk}P_{\ee,L}\]
and write $H^\pm=H_{F^\pm}$. The Hankel operator $H_\kk$ is related to the \emph{Toeplitz operator} $P_{\ee,L}\chi_{\kk}P_{\ee,L}$ in a similar way as the correspondingly named operators on Hardy spaces \cite{MR1864396}. In particular, we note that
\[(H_\kk)^*H_\kk= P_{\ee,L}\chi_{\kk}P_{\ee,L} -P_{\ee,L}\chi_{\kk}P_{\ee,L}\chi_{\kk}P_{\ee,L}= P_{\ee,L}\chi_{\kk}P_{\ee,L}-(P_{\ee,L}\chi_{\kk}P_{\ee,L})^2.\]
Since $P_{\ee,L}\chi_{\kk}P_{\ee,L}$ and $T_\kk$ share the same non-zero eigenvalues, for $p>0$,
\[\|T_\kk-(T_\kk)^2\|_p^p=\|H_\kk\|_{2p}^{2p}.\]
{Recall that for two operators $S_1,S_2$ in the $p$-Schatten class, $0<p\le 1$,   one has 
	\[ \|{S_1}+{ S_2}\|_{p}^{p}\leq  \| { S_1}\|_{p}^{p}+ \| { S_2}\|_{p}^{p}.\]
}

\begin{lemma}
	\label{lem:pm}
	{Let $L\ge 1$, $d\geq 2$, and $\ee,F\subseteq \R^d$ be  compact domains with regular boundaries at scales $\eta_{\partial\ee}\ge 1$, $\eta_{\partial F}={ |\partial F|^{1/(d-1)} }\ge1$,  with constants $\kappa_{\partial\ee}$, $\kappa_{\partial F}$ respectively.} Assume also that $|\partial E|\ge 1$ and $F\subseteq (-L/2,L/2)^d$.
	
	Then for $\varepsilon\in(0,1/2)$, we have
	\[\# M_\varepsilon(T^\pm)\lesssim   \frac{|\partial \ee|}{\kappa_{\partial \ee}  } \cdot \frac{|\partial F|}{\kappa_{\partial F} } \cdot \log\left(\frac{  |\partial \ee||\partial F|}{\kappa_{\partial \ee} \  \varepsilon}\right)^{2d(1+\alpha)+1} .\]
\end{lemma}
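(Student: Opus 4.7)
The plan is to combine Theorem \ref{th:cube} with $p$-Schatten quasi-norm estimates for Hankel operators, decomposed according to the dyadic structure of $F^\pm$. Starting from \eqref{eq_lp} and the identity $\|T_K-T_K^2\|_p^p=\|H_K\|_{2p}^{2p}$, one has
\[
\#M_\varepsilon(T^\pm)\le \frac{\|H^\pm\|_{2p}^{2p}}{(\varepsilon(1-\varepsilon))^p}, \qquad 0<p\le \tfrac12.
\]
Since $F^-=\bigsqcup_{k\ge 0,\,j\in J_k}Q_{k,j}$ and $F^+=F^-\cup\bigsqcup_{v\in V}R_v$, the Hankel operators split as $H^-=\sum H_{Q_{k,j}}$ and $H^+=H^-+\sum H_{R_v}$, and the quasi-triangle inequality for the $2p$-Schatten norm (valid since $2p\le 1$) gives a term-by-term bound.

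For each cube $Q_{k,j}$ of side $2^k\in[1,L]$, Theorem \ref{th:cube} applies with $W_{\max}=2^k$, and since $\eta_{\partial \ee}\ge 1\le 2^k$ we have $\max\{2^k,1/\eta_{\partial \ee}\}=2^k$. Lemma \ref{lem:schp} then converts the eigenvalue bound into
\[
\|H_{Q_{k,j}}\|_{2p}^{2p}\lesssim (2^k)^{d-1}\,\frac{|\partial \ee|}{\kappa_{\partial \ee}}\,\Big(\log\tfrac{(2^k)^{d-1}|\partial \ee|}{\kappa_{\partial \ee}}+\tfrac{1}{p}\Big)^{2d(1+\alpha)}.
\]
The isoperimetric bound $2^{kd}\le |F|\lesssim |\partial F|^{d/(d-1)}$ yields $(2^k)^{d-1}\lesssim |\partial F|$, whence the parenthesized factor is $\lesssim \log(|\partial \ee||\partial F|/\kappa_{\partial \ee})+1/p$. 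The same argument applies to each $H_{R_v}$ with $(2^k)^{d-1}$ replaced by $1$, since $W_{\max}\le 1$ for the unit rectangles.

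The main geometric step is bounding $\Sigma:=\sum_{k\ge 0,\,j\in J_k}(2^k)^{d-1}$ and $\#V$ in terms of $|\partial F|/\kappa_{\partial F}$. By maximality, the dyadic parent $Q'$ of each $Q_{k,j}$ is not contained in $F$ but contains a child in $F$, so $Q'$ meets $\partial F$. If $2^{k+1}\le \eta_{\partial F}=|\partial F|^{1/(d-1)}$, picking $x_{Q'}\in \partial F\cap Q'$ and applying maximal Ahlfors regularity gives $\mathcal{H}^{d-1}(\partial F\cap B_{2^{k+1}}(x_{Q'}))\ge \kappa_{\partial F}(2^{k+1})^{d-1}$; since these balls are contained in $d$-dependent dilations of the disjoint parent cubes (hence have bounded overlap), summation yields $\#J_k(2^k)^{d-1}\lesssim |\partial F|/\kappa_{\partial F}$ per level, over $O(\log|\partial F|)$ relevant levels. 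For $2^{k+1}>\eta_{\partial F}$ the volume bound $\#J_k\cdot 2^{kd}\le |F|$ combined with geometric summation contributes only $\lesssim |\partial F|$. In total, $\Sigma\lesssim (|\partial F|/\kappa_{\partial F})(1+\log|\partial F|)$. Similarly, since $F\setminus F^-$ lies in an $O(1)$-neighborhood of $\partial F$, Lemma \ref{lem:coar} with $s=O(1)$ and $\eta_{\partial F}\ge 1$ gives $\#V\lesssim |\partial F|/\kappa_{\partial F}$.

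Combining everything and choosing $p^{-1}\asymp \max\{\log(1/\varepsilon),2\}$, the factor $(\varepsilon(1-\varepsilon))^{-p}$ becomes $O(1)$ while $1/p$ merges with the logarithmic term as $\log(|\partial \ee||\partial F|/(\kappa_{\partial \ee}\varepsilon))$. This yields
\[
\#M_\varepsilon(T^\pm)\lesssim \frac{|\partial \ee|}{\kappa_{\partial \ee}}\cdot\frac{|\partial F|}{\kappa_{\partial F}}\cdot (1+\log|\partial F|)\cdot \log\Big(\frac{|\partial \ee||\partial F|}{\kappa_{\partial \ee}\,\varepsilon}\Big)^{2d(1+\alpha)},
\]
and the $(1+\log|\partial F|)$ factor is absorbed into the target exponent $2d(1+\alpha)+1$ using $|\partial F|\le |\partial \ee||\partial F|/(\kappa_{\partial \ee}\varepsilon)$. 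The principal technical obstacle is the geometric counting behind the bound for $\Sigma$ --- specifically the transition from Ahlfors regularity at scales $\le \eta_{\partial F}$ to the isoperimetric/volume bound beyond that scale, which is precisely what produces the extra logarithm encoded in the final exponent.
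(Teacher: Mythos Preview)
Your proposal is correct and follows essentially the same approach as the paper: decompose $H^\pm$ according to the dyadic structure of $F^\pm$, apply Theorem~\ref{th:cube} together with Lemma~\ref{lem:schp} on each piece, bound the number of pieces geometrically, and choose $p\asymp 1/\log(1/\varepsilon)$. The only minor differences are that the paper obtains the per-level estimate $\#J_k\,(2^k)^{d-1}\lesssim |\partial F|/\kappa_{\partial F}$ via Lemma~\ref{lem:coar} (using $Q_{k,j}\subseteq \partial F+B_{\sqrt{d}\,2^{k+1}}(0)$ and $2^{k(d-1)}\le |\partial F|$) rather than by a direct Ahlfors bounded-overlap argument on parent cubes, and that in the final summation the paper keeps the $k$-dependence inside the logarithm and uses the elementary bound $\sum_{k=0}^N(C+Dk)^a\le (C+D(N+1))^{a+1}/(D(a+1))$ to produce the extra~$+1$ in the exponent, whereas you factor the logarithm out and pick up a separate $\log|\partial F|$ factor---both routes yield the same conclusion.
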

\begin{proof}
	If $k\in\Z$ is such that $J_k\neq \emptyset$, then there is a cube of length $2^k$ included in $F$. In particular, the projection of $\partial F$ onto the hyperplane $\{x_1=0\}$ contains a $(d-1)$-dimensional cube of length $2^k$ and therefore $2^{k(d-1)}\leq |\partial F|$. The maximality of the dyadic decomposition of $F$ implies that $Q_{j,k}\subseteq \partial F+ B_{\sqrt{d}2^{k+1}}(0)$ for  $j\in J_k$. From Lemma \ref{lem:coar} and the fact that $\eta_{\partial F}={|\partial F|^{1/(d-1)}}$, we thus derive
	\begin{align}\label{ecard1}
		2^{dk} \#J_k \le |\partial F+ B_{\sqrt{d}2^{k+1}}(0)| \lesssim 2^k \frac{|\partial F|}{\kappa_{\partial F} }\left(1+\frac{2^{k(d-1)}}{|\partial F|}\right) \lesssim 2^k \frac{|\partial F|}{\kappa_{\partial F} }.
	\end{align}
	Similarly,
	\begin{align}\label{ecard2}
		\# V \le |\partial F+ B_{\sqrt{d}}(0)| \lesssim  \frac{|\partial F|}{\kappa_{\partial F} }.
	\end{align}
	For {$0<2p\le 1$}, and $\varepsilon\in(0,1/2)$, by \eqref{eq_lp}, we get
	\begin{align*}
		\# M_\varepsilon(T^+)&\le \frac{\|T^+-(T^+)^2\|_p^p}{(\varepsilon-\varepsilon^2)^p}= \frac{\|H^+\|_{2p}^{2p}}{(\varepsilon-\varepsilon^2)^p} 
		\\ &\le \left( {2}/{ \varepsilon}\right)^{p} \sum_{k\ge 0} \sum_{j\in J_k} \|H_{Q_{k,j}}\|_{2p}^{2p} + \left( {2}/{ \varepsilon}\right)^{p}  \sum_{v\in V} \|H_{R_v}\|_{2p}^{2p}
		\\ &\lesssim \varepsilon^{-p} \sum_{k\ge 0} \sum_{j\in J_k} \|T_{Q_{k,j}}-T_{Q_{k,j}}^2\|_{p}^{p} + \varepsilon^{-p} \sum_{v\in V} \|T_{R_v}-T_{R_v}^2\|_{p}^{p}.
	\end{align*}
	We invoke Theorem \ref{th:cube} to obtain spectral deviation estimates for each operator $T_{Q_{k,j}}$. These imply that we can apply Lemma \ref{lem:schp} to $T_{Q_{k,j}}$ with $$C\lesssim 2^{k(d-1)}\frac{|\partial\ee|}{\kappa_{\partial\ee}},\quad\text{ and }\quad D=\log\left(2^{k(d-1)}\frac{|\partial\ee|}{\kappa_{\partial\ee}}\right).$$   Similarly, the same holds for $T_{R_v}$ with $k=1$. Choosing $p=\log(2)\big(2\log (\varepsilon^{-1})\big)^{-1}$  {(which ensures that $2p\le 1$ for every $\varepsilon\in(0,1/2)$)} thus yields
	\begin{align*}
		\# M_\varepsilon(T^+)&\hspace{-1pt}\lesssim \hspace{-1pt} \frac{|\partial \ee|}{\kappa_{\partial \ee}  }\hspace{-3pt} \left( \sum_{k\ge 0} \sum_{j\in J_k} 2^{k(d-1)}\hspace{-1pt}  \log\hspace{-2pt}\left(\hspace{-1pt}\frac{  |\partial \ee|2^{k(d-1)}}{\kappa_{\partial \ee}  \ \varepsilon}\hspace{-1pt}\right)^{2d(1+\alpha)} \hspace{-5pt} + \hspace{-1pt} \# V\hspace{-3pt} \cdot\hspace{-2pt} \log\hspace{-2pt}\left(\hspace{-1pt}\frac{  |\partial \ee|}{\kappa_{\partial \ee}  \ \varepsilon}\hspace{-1pt}\right)^{2d(1+\alpha)}\hspace{-1pt}  \right)
		\\ &\hspace{-1pt}\lesssim \hspace{-1pt}  \frac{|\partial \ee|}{\kappa_{\partial \ee}  }  \frac{|\partial F|}{\kappa_{\partial F} }
		\sum_{\substack{k\ge 0 \\ 2^{k(d-1)}\le |\partial F|}} \log\left(\frac{  |\partial \ee|2^{k(d-1)}}{\kappa_{\partial \ee}  \ \varepsilon}\right)^{2d(1+\alpha)}
		\\ &\hspace{-1pt}= \hspace{-1pt}  \frac{|\partial \ee|}{\kappa_{\partial \ee}  }  \frac{|\partial F|}{\kappa_{\partial F} }
		\sum_{0\le k\leq \left\lfloor \frac{\log(|\partial F|)}{\log(2)(d-1)}\right\rfloor}\left( \log\left(\frac{  |\partial \ee| }{\kappa_{\partial \ee}  \ \varepsilon}\right)+(d-1)\log(2)k\right)^{2d(1+\alpha)},
	\end{align*}
	where in the second-to-last step we used \eqref{ecard1}, \eqref{ecard2}, and the fact that $2^{k(d-1)}\leq |\partial F|$ whenever $J_k\neq \emptyset$. Finally, noting that for $C,D,a>0$,
	\[\sum_{k=0}^N (C+Dk)^a\le\int_0^{N+1} (C+Dx)^a dx\le\frac{( C+D(N+1))^{a+1}}{{D}(a+1)},\]
	we get,
	\begin{align*}
		\\  \# M_\varepsilon(T^+)\lesssim \frac{|\partial \ee|}{\kappa_{\partial \ee}  }  \frac{|\partial F|}{\kappa_{\partial F} }  \log\left(\frac{  |\partial \ee| |\partial F|}{\kappa_{\partial \ee}  \ \varepsilon}\right)^{2d(1+\alpha)+1}.
	\end{align*}
	The same argument applies to $ \# M_\varepsilon(T^-)$, and yields the desired conclusion.
\end{proof}

\subsection{The transition index}
The following estimate is part of the proof of \cite[Theorem~1.5]{AbPeRo2} (see also \cite[Lemma~4.3]{maro21}) and allows us to find the index where eigenvalues cross the $1/2$ threshold. We include a proof for the sake of completeness.
\begin{lemma} \label{lem:half}
	For any trace class operator $0\le S\le 1$,
	\begin{enumerate}[label=(\roman*)]
		\item\label{onehalf1} $
		\lambda_n\leq \frac{1}{2},$ for every $n\geq \lceil\tr(S)\rceil+ \max\{2\tr(S-S^2),1\}$; 
		\item\label{onehalf2} $\lambda_n\geq \frac{1}{2},$ for every $1\leq n\leq \lceil \tr(S)\rceil- \max\{2\tr(S-S^2),1\}$.
	\end{enumerate}
\end{lemma}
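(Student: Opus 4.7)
The plan is to reduce both parts to the pair of elementary estimates
\[
N := \#\{n : \lambda_n > \tfrac{1}{2}\} \le \tr(S) + 2\tr(S-S^2), \qquad M := \#\{n : \lambda_n \ge \tfrac{1}{2}\} \ge \tr(S) - 2\tr(S-S^2),
\]
and then to pass from these to the quantitative index bounds by a short case analysis that accounts for the ceiling and the $\max\{\cdot,1\}$ term.

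For the first estimate I would exploit the two pointwise inequalities on $[0,1]$: namely $1-\lambda \le 2\lambda(1-\lambda)$ for $\lambda \in (\tfrac12,1]$ (which holds because $2\lambda>1$) and $\lambda \le 2\lambda(1-\lambda)$ for $\lambda \in [0,\tfrac12]$ (which holds because $2(1-\lambda)\ge 1$). Writing
\[
N = \tr(S) + \sum_{\lambda_n>1/2}(1-\lambda_n) - \sum_{\lambda_n\le 1/2}\lambda_n
\]
and applying the two inequalities summand by summand yields $N \le \tr(S)+2\tr(S-S^2)$. Inspecting the equality cases in the pointwise inequalities shows that equality forces every $\lambda_n\in\{0,1\}$, i.e.\ $S$ is a projection, in which case $\tr(S-S^2)=0$. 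This strictness will matter in a boundary subcase below. For the companion bound on $M$ I would split $\tr(S)=\sum_{\lambda_n\ge 1/2}\lambda_n+\sum_{\lambda_n<1/2}\lambda_n$ and use $\lambda_n\le 1$ on the first piece and $\lambda_n\le 2\lambda_n(1-\lambda_n)$ on the second piece to obtain $\tr(S)\le M+2\tr(S-S^2)$.

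The remaining task is essentially bookkeeping. Write $\tr(S)=k+\alpha$ with $k\in\N_0$ and $\alpha\in[0,1)$, and set $\beta=\tr(S-S^2)$. To prove (i) I want the smallest integer $n$ with $n\ge \lceil\tr(S)\rceil+\max\{2\beta,1\}$ to satisfy $n\ge N+1$. I would check the four subcases $\alpha=0$ versus $\alpha>0$ and $2\beta\ge 1$ versus $2\beta<1$: in each, the combination of $N\le k+\alpha+2\beta$, the ceiling, and the slack provided by $\max\{2\beta,1\}\ge 1$ suffices, via short arithmetic using that $N$ is an integer. Part (ii) is handled by the dual argument with $M\ge k+\alpha-2\beta$ and the floor: one checks that $\lceil\tr(S)\rceil-\max\{2\beta,1\}\le M$ in each of the analogous subcases.

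The one subtle point, which I expect to be the only real obstacle, is the degenerate configuration in (i) where $\alpha=0$ and $2\beta$ is a positive integer, so that $\lceil\tr(S)\rceil+\max\{2\beta,1\}$ equals exactly the a priori upper bound $\tr(S)+2\tr(S-S^2)$ on $N$. There the weak inequality $N\le\tr(S)+2\tr(S-S^2)$ would just barely fail to give $N+1\le n$. The resolution is to observe that equality in the two pointwise bounds forces $\lambda_n\in\{0,1\}$, hence $\tr(S-S^2)=0$, contradicting $2\beta\in\Z_{\ge 1}$; thus in this subcase the inequality is strict, so $N<\tr(S)+2\tr(S-S^2)$ and the integrality of $N$ yields $N+1\le \tr(S)+2\tr(S-S^2)$, closing the argument.
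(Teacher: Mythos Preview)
Your argument is correct and complete, including the handling of the boundary subcase via strictness. It is, however, a genuinely different route from the paper's proof. The paper does not bound the counting functions $N,M$ directly; instead it fixes $K=\lceil\tr(S)\rceil$ and, via an Abel-type rearrangement of $\tr(S-S^2)=\sum_n\lambda_n(1-\lambda_n)$ that exploits monotonicity at the pivot $\lambda_K$, establishes the tail bounds
\[
\sum_{n>K}\lambda_n\le \tr(S-S^2),\qquad \sum_{n<K}(1-\lambda_n)\le \tr(S-S^2).
\]
Part (i) then follows by pigeonhole: for $j\in\N$ with $j\ge 2\tr(S-S^2)$ one has $j\,\lambda_{K+j}\le\sum_{n>K}\lambda_n\le\tr(S-S^2)$, hence $\lambda_{K+j}\le 1/2$; part (ii) is dual. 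The paper's approach thus avoids the four-way case split and the strictness argument (it only needs to isolate the projection case $\tr(S-S^2)=0$ at the outset), at the price of a slightly less transparent algebraic identity. Your approach trades that identity for the simple pointwise bounds $1-\lambda\le 2\lambda(1-\lambda)$ on $(\tfrac12,1]$ and $\lambda\le 2\lambda(1-\lambda)$ on $[0,\tfrac12]$, which makes the source of the factor $2$ more visible, but then has to absorb the ceiling and the $\max\{\cdot,1\}$ by hand.
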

\begin{proof}
First notice that if $S$ is an orthogonal projection, then the result holds trivially, so we can assume otherwise. In particular, we have that $\tr(S-S^2)>0$.

	Set $K=\lceil\tr(S)\rceil$ and write
	\begin{align*}
		\tr (S)-\tr(S^2)&=\sum_{n=1}^\infty \lambda_n(1-\lambda_n)
		\\
		&=\sum_{n=1}^{K}\lambda_n(1-\lambda_n)+\sum_{n=K+1}^\infty\lambda_n(1-\lambda_n)
		\\
		&\geq \lambda_{K}\sum_{n=1}^{K}(1-\lambda_n)+(1-\lambda_{K})\sum_{n=K+1}^\infty\lambda_n
		\\
		&=\lambda_K K-\lambda_K\sum_{n=1}^{K}\lambda_n+(1-\lambda_{K})\Big(\tr(S)-\sum_{n=1}^{K}\lambda_n\Big)
		\\
		&=\lambda_K K+(1-\lambda_{K})\tr(S)-\sum_{n=1}^{K}\lambda_n
		\\
		&=\tr(S)-\sum_{n=1}^{K}\lambda_n+\lambda_K(K-\tr(S)).
	\end{align*}
	Hence
	\begin{equation}\label{eq:sum1}
		\sum_{n=K+1}^\infty\lambda_n=\tr(S)-\sum_{n=1}^{K}\lambda_n\leq \tr (S)-\tr(S^2),
	\end{equation}
	and 
	\begin{align}\label{eq:sum2}
		\sum_{n=1}^{K-1}(1-\lambda_n)&=\tr(S)-\sum_{n=1}^{K}\lambda_n+\lambda_K(K-\tr(S)) -(1-\lambda_K)(1+\tr(S)-K)\notag
		\\
		&\leq\tr(S)-\sum_{n=1}^{K}\lambda_n+\lambda_K(K-\tr(S))  \leq \tr (S)-\tr(S^2).
	\end{align}
	Now let $j\in\N$ such that $j\ge 2(\tr(S)-\tr(S^2))$ and  consider $k=K+j$ . It follows from \eqref{eq:sum1} that
	$$
	2(\tr(S)-\tr(S^2))\cdot\lambda_k\leq j\cdot\lambda_{K+j}\leq \sum_{n=K+1}^\infty\lambda_n\leq \tr (S)-\tr(S^2),
	$$
	which shows $\lambda_k\leq 1/2$ as $0<\tr(S)-\tr(S^2)<\infty$; this proves part \ref{onehalf1}. 
	
	For part \ref{onehalf2}, if $1\leq k=K-j\leq K-2(\tr(S)-\tr(S^2))$ for $j\in\N$, then \eqref{eq:sum2} implies 
	$$
	2(\tr(S)-\tr(S^2))\cdot(1-\lambda_k)\leq j\cdot (1-\lambda_{K-j})\leq \sum_{n=1}^{K-1}(1-\lambda_n)\leq \tr(S)-\tr(S^2),
	$$
	yielding $\lambda_k\geq 1/2$. This completes the proof.
\end{proof}

\subsection{Proof of the main result}	

With all the preparatory work at hand, we are ready to prove the main result.
	
	\begin{proof}[Proof of Theorem~\ref{th2}]
		Recall from \eqref{eq:stretch} that the eigenvalues of the concentration operator remain the same if we replace $\ee,F$ and $L$ with $t^{-1}\ee,tF$ and $tL$ respectively.  We choose $t=|\partial \ee|^{1/(d-1)}$ and notice that $t^{-1}\ee$ satisfies $|\partial t^{-1}\ee|=1,$ $\eta_{ \partial t^{-1}\ee}=t^{-1}\eta_{\partial \ee}=1$, and $\kappa_{ \partial t^{-1}\ee}=\kappa_{\partial \ee}$. Furthermore, we also have that $tF$ has regular boundary at scale $\eta_{ \partial tF}=  t\eta_{\partial F}=(|\partial\ee||\partial F|)^{1/(d-1)}\geq 1$ with constant $\kappa_{\partial tF}=\kappa_{\partial F}$, and $tL\ge 1$ by assumption on $L$.
		
		Note that for $F'\subseteq (-tL/2,tL/2)^d$, the operator $T$ has integral kernel
		\[K(x,y)=\chi_{F'}(x)\chi_{F'}(y)\frac{1}{(tL)^d}\sum_{k\in(t^{-1}\ee)_{tL}} e^{-2\pi i k (x-y)}.\]
		Thus,
		\[\tr(T)=\int K(x,x)dx=\int_{F'}\frac{1}{(tL)^d}\sum_{k\in (t^{-1}\ee)_{tL}}1 dx=\frac{\#(t   ^{-1}\ee)_{tL}}{(tL)^d}|F'|.\]
		On the other hand, from Lemmas \ref{lem:schp} and \ref{lem:pm} we have that
		\begin{align*}
			\tr\big(T^\pm-(T^\pm)^2\big)&\leq C_{\alpha,d} \frac{|\partial t^{-1}\ee|}{\kappa_{\partial t^{-1} \ee}  } \frac{|\partial tF|}{\kappa_{\partial t F} }  \log\left(\frac{ e |\partial t^{-1} \ee| |\partial tF|}{\kappa_{\partial t^{-1}\ee}   }\right)^{2d(1+\alpha)+1} 
			\\
			&=   C_{\alpha,d} \frac{|\partial \ee|}{\kappa_{\partial \ee}  }  \frac{|\partial F|}{\kappa_{\partial F} }  \log\left(\frac{e  |\partial \ee| |\partial F|}{\kappa_{\partial \ee}   }\right)^{2d(1+\alpha)+1}  =:C_{\ee,F}.
		\end{align*}
	So from Lemma \ref{lem:half},
	\begin{align*}
		\lambda_n(T^+) &\leq \frac{1}{2}, & n &\geq \Big\lceil\frac{\#(t^{-1}\ee)_{tL}}{(tL)^d} |(tF)^+|\Big\rceil+ 2C_{\ee,F}; 
		\\ \lambda_n(T^-) &\geq \frac{1}{2}, &  n&\leq \Big\lceil \frac{\#(t^{-1}\ee)_{tL}}{(tL)^d} |(tF)^-|\Big\rceil- 2C_{\ee,F}.
	\end{align*}
	By Corollary \ref{cor:coar} and $|\partial t^{-1}\ee|=1$,
	\begin{align*}
		\#\{n\in \N: \ \lambda_n(T^-)<1/2,\ \lambda_n(T^+)>1/2\} &\lesssim \frac{1}{\kappa_{\partial\ee}}|(tF)^+\smallsetminus (tF)^-| + C_{\ee,F}
		\\ &\le \frac{1}{\kappa_{\partial\ee}}|\partial tF + B_{\sqrt{d}}(0)| + C_{\ee,F}
		\\ &\lesssim \frac{1}{\kappa_{\partial\ee}} \frac{t^{d-1}|\partial F|}{\kappa_{\partial F}} + C_{\ee,F}
		\lesssim C_{\ee,F},
	\end{align*}
	where in the second to last step we used Lemma \ref{lem:coar}. Since $\lambda_n(T^-)\le \lambda_n(T)\le \lambda_n(T^+)$ for every $n\in \N$, again by Lemma \ref{lem:pm},
	\begin{align*}
		\# M_\varepsilon(T)\leq& \#\{n\in \N: \ 1/2\leq\lambda_n(T^-)<1-\varepsilon\}+\#\{n\in \N: \ \varepsilon<\lambda_n(T^+)\leq 1/2\}
		\\ &+\#\{n\in \N: \ \lambda_n(T^-)<1/2,\ \lambda_n(T^+)>1/2\}		
		\\ \lesssim& \# M_\varepsilon(T^-)+\# M_\varepsilon(T^+) + \#\{n\in \N: \ \lambda_n(T^-)<1/2, \lambda_n(T^+)>1/2\}
		\\\lesssim&   \frac{|\partial \ee|}{\kappa_{\partial \ee}  }  \frac{|\partial F|}{\kappa_{\partial F} } \log\left(\frac{ |\partial \ee|  |\partial F|}{\kappa_{\partial \ee}  \ \varepsilon}\right)^{2d(1+\alpha)+1}. \qedhere
	\end{align*}
\end{proof}

\section{The continuous Fourier transform}\label{sec:con}
In this section we deduce Theorem \ref{th1} from Theorem \ref{th2} by letting $L\to \infty$.

\begin{proof}[Proof of Theorem \ref{th1}]
Fix $\ee$ and $F$ as in the statement of Theorem \ref{th1}. We consider a sufficiently large resolution such that $L\geq |\partial \ee|^{-1/(d-1)}$ and $F\subseteq (-L/2,L/2)^d$.

Let $S_L:L^2(\R^d)\to L^2(\R^d)$ be the operator given by
\[S_L f=T_{\ee,F,L} (\chi_F f)=\chi_F \F_L^{-1}\chi_{\ee_L} \F_L \chi_F f, \qquad f\in L^2(\R^d).\]
Note that $S_L$ and $T_{\ee,F,L}$ share the same non-zero eigenvalues, and recall the operator $S$ from \eqref{eq_S}.

\noindent {\bf Step 1}. We show that
\begin{align}\label{eq_a}
\lim_{L\to \infty} \|S_L-S\|=0.
\end{align}

Recall that $Q_{L^{-1}}=L^{-1}[-1/2,1/2)^d$ and define the auxiliary set
\begin{align*}
	\Gamma_L = 	\bigcup_{m\in\ee_L} m+Q_{L^{-1}}. 
\end{align*}
Note that the symmetric difference $\ee\Delta\Gamma_L$ is included in $\partial\ee+B_{L^{-1}\sqrt{d}}(0)$.
From Lemma \ref{lem:coar},
\begin{align*}
	|\ee\Delta\Gamma_L| \le |\partial\ee+B_{L^{-1}\sqrt{d}}(0)|
	\lesssim \frac{|\partial\ee|}{\kappa L}\big(1+(L\eta_{\partial\ee})^{-(d-1)}\big)\xrightarrow{L\to\infty} 0.
\end{align*}
Using this and setting $R_L=\chi_F \F^{-1}\chi_{\Gamma_L} \F \chi_F$, for $f\in L^2(\R^d)$ we have
\begin{align*}
	\|(R_L-S)f\|_2^2 &\le \|(\chi_{\Gamma_L}-\chi_\ee) \F(\chi_F f) \|_2^2
	\le |\ee\Delta\Gamma_L| \|\F(\chi_F f)\|_\infty^2
	\\ &\le |\ee\Delta\Gamma_L| \|\chi_F f\|_1^2
	\le |\ee\Delta\Gamma_L| |F| \| f\|_2^2\xrightarrow{L\to\infty} 0.
\end{align*}
To prove \eqref{eq_a}, it only remains to show that
\begin{align}\label{eqb}
\|R_L-S_L\|\xrightarrow{L\to\infty} 0.
\end{align}
To this end, let $f\in L^2(\R^d)$ and estimate
\begin{align*}
	\|R_Lf-S_Lf\|_2^2&= \int_F \Big| \int_{\Gamma_L} \F(\chi_F f)(w)e^{2\pi i wx}dw -L^{-d}\sum_{m\in \ee_L}\F(\chi_F f)(m)e^{2\pi i mx}  \Big|^2 dx
	\\&= \int_F \Big|\sum_{m\in \ee_L} \int_{m+Q_{L^{-1}}} \F(\chi_F f)(w)e^{2\pi i wx}-\F(\chi_F f)(m)e^{2\pi i mx} dw \Big|^2 dx
	\\&= \int_F \Big|\sum_{m\in \ee_L} \int_{m+Q_{L^{-1}}} \int_F f(t)\big(e^{2\pi i w(x-t)}-e^{2\pi i m(x-t)}\big)dt dw \Big|^2 dx
	\\&\lesssim \int_F \Big(\sum_{m\in \ee_L} \int_{m+Q_{L^{-1}}} \int_F |f(t)||w-m||x-t| dt dw \Big)^2 dx
	\\&\lesssim \int_F \Big(\sum_{m\in \ee_L} L^{-(d+1)} \int_F |f(t)||x-t| dt \Big)^2 dx
	\\&\lesssim (\#\ee_L)^2 L^{-2(d+1)} \int_F \|f\|_2^2 \int_F |x-t|^2 dt  dx
	\\&\lesssim  L^{-2} \frac{\max\{|\partial \ee|^{2d/(d-1)},1\}}{\kappa_{\partial\ee}^2}|F|^2 \diam(F)^2 \|f\|_2^2,
\end{align*}
where in the last estimate we used Corollary \ref{cor:coar}. Hence \eqref{eqb} holds.

\smallskip

\noindent {\bf Step 2}. Since $S_L$ and $T_{\ee,F,L}$ share the same non-zero eigenvalues, the estimates in Theorem \ref{th2} apply also to $S_L$ for all sufficiently large $L$. By the Fischer-Courant formula, operator convergence of positive compact operators implies convergence of their eigenvalues. Hence, by \eqref{eq_a}, the estimate satisfied by the spectrum of $S_L$ extends to the spectrum of $S$.
\end{proof}

\section{The discrete Fourier transform}\label{sec_dis}

\begin{proof}[Proof of Theorem \ref{th3}]
Let us define $\ee := \Omega+\overline{Q_1}$. Then $\Omega=\ee_L$ for $L=1$.
Let us apply Theorem \ref{th2} with $L=1$ to $\ee$, $F$. We check the relevant hypotheses.

We first note that $\partial E$ is an almost disjoint union of faces of cubes (by almost disjoint we mean that 
 the intersection of any two faces has zero $\H^{d-1}$-measure). Moreover, each one is contained in $k+\overline{Q_1}$ for exactly one $k\in \Omega$ that must belong to $\partial \Omega$. In particular,
\[\partial \ee \subset \bigcup_{k \in \partial \Omega} k + \partial Q_1.\]
Conversely, for each point  $k\in \partial \Omega$ at least one face of the cube $k+\overline{Q_1}$ lies in $\partial \ee$. Thus,
\begin{equation}\label{eq:disc-vs-cont-dom2}
\#\partial\Omega\leq   \big|\partial\ee\big|\leq 2d\cdot\#\partial\Omega,
\end{equation}
and consequently 
\begin{align*}
    |\partial\ee|| \partial F|\geq \#\partial\Omega \cdot | \partial F|\geq 1.
\end{align*}
Moreover, \eqref{eq:disc-vs-cont-dom2} shows that the choice $L=1$ satisfies $L\ge |\partial\ee\big|^{-1/(d-1)}$ as $\partial\Omega$ contains at least one point.

Now fix $0<r \le \sqrt{d}\cdot(\#\partial\Omega)^{1/(d-1)}$ and let us show that $\partial\ee$ is regular at maximal scale. If $r\le 2\sqrt{d}$, and $x\in \partial E$ we clearly have $\H^{d-1}\big(\partial\ee\cap B_r(x)\big)\gtrsim r^{d-1}$ as $\ee$ is a union of cubes of length $1$.  
If $r> 2\sqrt{d}$, set $n=\lfloor r/\sqrt{d} \rfloor$ and let $x\in\partial\ee$.  There exists  $k_x\in \partial \Omega$ such that $|k_x-x|\le \sqrt{d}/2$. Note that for $y\in k_x+\overline{Q_{n+1}}$,
\[|y-x|\le \frac{\sqrt{d} (n+1)}{2}+\frac{\sqrt{d}}{2}\le \frac{r}{2}+\sqrt{d}<r.\]
Hence, $k_x+\overline{Q_{n+1}}\subseteq B_r(x)$. This together with the fact that for $k\in \partial \Omega$ at least one face of the cube $k+\overline{Q_1}$ lies in $\partial \ee$ gives 
\begin{align*}
\H^{d-1}\big(\partial\ee\cap B_r(x)\big)&\ge   \H^{d-1}\big(\partial\ee\cap k_x+\overline{Q_{n+1}}\big)\ge \H^{d-1}\Big(\partial\ee\cap \bigcup_{k\in \partial\Omega  \cap k_x+Q_{n}} k+\partial Q_1\Big)
\\ &\ge \#  \big( \partial\Omega  \cap k_x+Q_{n}\big)
\ge \kappa_{\partial\Omega} n^{d-1} 
\gtrsim \kappa_{\partial\Omega} r^{d-1}.
\end{align*}
This shows that $\partial\ee$  is regular at scale $\sqrt{d}\cdot(\#\partial\Omega)^{1/(d-1)}$ with constant $C_d \cdot\kappa_{\partial\Omega}$. Note that if a set $X$ is regular at scale $\eta_X$ and constant $\kappa_X$, then it is also regular at scale $\alpha\eta_X$ and constant $\min\{1,\alpha^{1-d}\}\kappa_X$, for every $\alpha>0.$
By \eqref{eq:disc-vs-cont-dom2} we therefore see that $\partial\ee$ is regular at scale
$\eta_{\partial\ee}=\big|\partial\ee\big|^{1/(d-1)}$ and constant $\kappa_{\partial\ee}\asymp \kappa_{\partial\Omega}$.

The desired estimates now follow by applying Theorem~\ref{th2} to $\ee$ and $F$ with $L=1$, together with \eqref{eq:disc-vs-cont-dom2}.
\end{proof}

\section{Proof of Remark \ref{rem_N}}\label{sec_rem}
First we combine Lemma~\ref{lem:half}, Lemma~\ref{lem:schp} (for $p=1$) and Theorem~\ref{th1} to conclude that there exist a constant  $C=C_{\alpha,d}>0$ such that if
$$
n\geq \lceil |E|\cdot |F|\rceil + C\frac{|\partial \ee|}{\kappa_{\partial \ee}  }\frac{|\partial F|}{\kappa_{\partial F} } \cdot  \log\left( \frac{e|\partial \ee||\partial F|}{ \kappa_{\partial \ee} }\right)^{2d(1+\alpha)+1}
=:C_1,
$$
then $\lambda_n\leq 1/2$, and if 
$$
n\leq \lceil |E|\cdot |F|\rceil - C\frac{|\partial \ee|}{\kappa_{\partial \ee}  }\frac{|\partial F|}{\kappa_{\partial F} } \cdot \log\left( \frac{e|\partial \ee||\partial F|}{ \kappa_{\partial \ee} }\right)^{2d(1+\alpha)+1}=:C_2,
$$
then $\lambda_n\geq  1/2$. 

For $\varepsilon\in(0,1)$, define $\varepsilon_0:=\min\{\varepsilon, 1-\varepsilon\}\le 1/2$ and let $0<\tau<\varepsilon_0$. Observe that
$$ \{1,...,\lfloor C_2\rfloor\}\smallsetminus M_\tau(S)
 \subseteq N_{1-\varepsilon_0}(S) \subseteq N_\varepsilon(S) \subseteq N_{\varepsilon_0}(S)\subseteq \{1,...,\lceil C_1\rceil\}\cup M_\tau(S),
$$  
where we understand $\{1,...,\lfloor C_2\rfloor\}$ to be $\varnothing$ if $C_2<1$.
Consequently,
\[ C_2-1-\#M_\tau(S) \leq \#N_\varepsilon(S)\leq  C_1 +1+\#M_\tau(S).\]
Rearranging the last expression and using Theorem~\ref{th1} for $\tau$ gives
\[\big|N_\varepsilon(S)-|E|\cdot |F|\big|\lesssim \frac{|\partial \ee|}{\kappa_{\partial \ee}  } \cdot \frac{|\partial F|}{\kappa_{\partial F} } \cdot \log\left( \frac{|\partial \ee||\partial F|}{ \kappa_{\partial \ee}\ \tau}\right)^{2d(1+\alpha)+1}. \]
Letting $\tau \nearrow \varepsilon_0$ yields \eqref{eq_N}.


\begin{thebibliography}{10}
	
	\bibitem{AbPeRo2}
	L.~D. Abreu, J.~M. Pereira, and J.~L. Romero.
	\newblock Sharp rates of convergence for accumulated spectrograms.
	\newblock {\em Inverse Problems}, 33(11):115008, 12, 2017.
	
	\bibitem{MR3989238}
	B.~Adcock and D.~Huybrechs.
	\newblock Frames and numerical approximation.
	\newblock {\em SIAM Rev.}, 61(3):443--473, 2019.
	
	\bibitem{anro20}
	J.~And\'{e}n and J.~L. Romero.
	\newblock Multitaper estimation on arbitrary domains.
	\newblock {\em SIAM J. Imaging Sci.}, 13(3):1565--1594, 2020.
	
	\bibitem{ansi17}
	J.~And{\'e}n and A.~Singer.
	\newblock Factor analysis for spectral estimation.
	\newblock In {\em Proc. SampTA}, pages 169--173. IEEE, 2017.
	
	\bibitem{bhzhsi16}
	T.~Bhamre, T.~Zhang, and A.~Singer.
	\newblock Denoising and covariance estimation of single particle cryo-{EM}
	images.
	\newblock {\em J. Struct. Biol.}, 195(1):72--81, 2016.
	
	\bibitem{BoJaKa}
	A.~Bonami, P.~Jaming, and A.~Karoui.
	\newblock Non-asymptotic behavior of the spectrum of the sinc-kernel operator
	and related applications.
	\newblock {\em J. Math. Phys.}, 62(3):Paper No. 033511, 20, 2021.
	
	\bibitem{Ca}
	D.~G. Caraballo.
	\newblock Areas of level sets of distance functions induced by asymmetric
	norms.
	\newblock {\em Pacific J. Math.}, 218(1):37--52, 2005.
	
	\bibitem{dj}
	J.~Diestel, H.~Jarchow, and A.~Tonge.
	\newblock {\em Absolutely summing operators}, volume~43 of {\em Cambridge
		Studies in Advanced Mathematics}.
	\newblock Cambridge University Press, Cambridge, 1995.
	
	\bibitem{MR1658223}
	J.~Dziuba\'{n}ski and E.~Hern\'{a}ndez.
	\newblock Band-limited wavelets with subexponential decay.
	\newblock {\em Canad. Math. Bull.}, 41(4):398--403, 1998.
	
	\bibitem{evga92}
	L.~C. Evans and R.~F. Gariepy.
	\newblock {\em Measure theory and fine properties of functions}.
	\newblock Studies in Advanced Mathematics. CRC Press, Boca Raton, FL, 1992.
	
	\bibitem{MR707957}
	C.~L. Fefferman.
	\newblock The uncertainty principle.
	\newblock {\em Bull. Amer. Math. Soc. (N.S.)}, 9(2):129--206, 1983.
	
	\bibitem{grunbaum2022serendipity}
	F.~A. Gr{\"u}nbaum.
	\newblock Serendipity strikes again.
	\newblock {\em Proc. Natl. Acad. Sci. U.S.A.}, 119(26):e2207652119, 2022.
	
	\bibitem{MR673519}
	F.~A. Gr\"{u}nbaum, L.~Longhi, and M.~Perlstadt.
	\newblock Differential operators commuting with finite convolution integral
	operators: some nonabelian examples.
	\newblock {\em SIAM J. Appl. Math.}, 42(5):941--955, 1982.
	
	\bibitem{Han+2008a}
	S.-C. Han and F.~J. Simons.
	\newblock Spatiospectral localization of global geopotential fields from the
	gravity recovery and climate experiment (grace) reveals the coseismic gravity
	change owing to the 2004 sumatra-andaman earthquake.
	\newblock {\em J. Geophys. Res.}, 113:B01405, 2008.
	
	\bibitem{MR1408902}
	E.~Hern\'{a}ndez and G.~Weiss.
	\newblock {\em A first course on wavelets}.
	\newblock Studies in Advanced Mathematics. CRC Press, Boca Raton, FL, 1996.
	\newblock With a foreword by Yves Meyer.
	
	\bibitem{MR2883827}
	J.~A. Hogan and J.~D. Lakey.
	\newblock {\em Duration and bandwidth limiting}.
	\newblock Applied and Numerical Harmonic Analysis. Birkh\"{a}user/Springer, New
	York, 2012.
	\newblock Prolate functions, sampling, and applications.
	
	\bibitem{is15}
	A.~Israel.
	\newblock The eigenvalue distribution of time-frequency localization operators.
	\newblock {\em arXiv:1502.04404v1}, 2015.
	
	\bibitem{isma23}
	A.~Israel and A.~Mayeli.
	\newblock On the eigenvalue distribution of spatio-spectral limiting operators
	in higher dimensions.
	\newblock {\em Appl. Comput. Harmon. Anal.}, 70:101620, 2024.
	
	\bibitem{KaRoDa}
	S.~Karnik, J.~Romberg, and M.~A. Davenport.
	\newblock Improved bounds for the eigenvalues of prolate spheroidal wave
	functions and discrete prolate spheroidal sequences.
	\newblock {\em Appl. Comput. Harmon. Anal.}, 55:97--128, 2021.
	
	\bibitem{MR3926960}
	S.~Karnik, Z.~Zhu, M.~B. Wakin, J.~Romberg, and M.~A. Davenport.
	\newblock The fast {S}lepian transform.
	\newblock {\em Appl. Comput. Harmon. Anal.}, 46(3):624--652, 2019.
	
	\bibitem{MR3634987}
	B.~Landa and Y.~Shkolnisky.
	\newblock Steerable principal components for space-frequency localized images.
	\newblock {\em SIAM J. Imaging Sci.}, 10(2):508--534, 2017.
	
	\bibitem{la67}
	H.~J. Landau.
	\newblock Necessary density conditions for sampling and interpolation of
	certain entire functions.
	\newblock {\em Acta Math.}, 117:37--52, 1967.
	
	\bibitem{MR487600}
	H.~J. Landau.
	\newblock On {S}zego's eigenvalue distribution theorem and non-{H}ermitian
	kernels.
	\newblock {\em J. Analyse Math.}, 28:335--357, 1975.
	
	\bibitem{MR140733}
	H.~J. Landau and H.~O. Pollak.
	\newblock Prolate spheroidal wave functions, {F}ourier analysis and
	uncertainty. {II}.
	\newblock {\em Bell System Tech. J.}, 40:65--84, 1961.
	
	\bibitem{MR147686}
	H.~J. Landau and H.~O. Pollak.
	\newblock Prolate spheroidal wave functions, {F}ourier analysis and
	uncertainty. {III}. {T}he dimension of the space of essentially time- and
	band-limited signals.
	\newblock {\em Bell System Tech. J.}, 41:1295--1336, 1962.
	
	\bibitem{MR593228}
	H.~J. Landau and H.~Widom.
	\newblock Eigenvalue distribution of time and frequency limiting.
	\newblock {\em J. Math. Anal. Appl.}, 77(2):469--481, 1980.
	
	\bibitem{maro21}
	F.~Marceca and J.~L. Romero.
	\newblock Spectral deviation of concentration operators for the short-time
	{F}ourier transform.
	\newblock {\em Studia Math.}, 270(2):145--173, 2023.
	
	\bibitem{MR4510936}
	J.~J. Mar\'{i}n, J.~M. Martell, D.~Mitrea, I.~Mitrea, and M.~Mitrea.
	\newblock {\em Singular integral operators, quantitative flatness, and boundary
		problems}, volume 344 of {\em Progress in Mathematics}.
	\newblock Birkh\"{a}user/Springer, Cham, [2022] \copyright 2022.
	
	\bibitem{MR3803283}
	R.~Matthysen and D.~Huybrechs.
	\newblock Function approximation on arbitrary domains using {F}ourier extension
	frames.
	\newblock {\em SIAM J. Numer. Anal.}, 56(3):1360--1385, 2018.
	
	\bibitem{MR1864396}
	N.~K. Nikolski.
	\newblock {\em Operators, functions, and systems: an easy reading. {V}ol. 1},
	volume~92 of {\em Mathematical Surveys and Monographs}.
	\newblock American Mathematical Society, Providence, RI, 2002.
	\newblock Hardy, Hankel, and Toeplitz, Translated from the French by Andreas
	Hartmann.
	
	\bibitem{Os}
	A.~Osipov.
	\newblock Certain upper bounds on the eigenvalues associated with prolate
	spheroidal wave functions.
	\newblock {\em Appl. Comput. Harmon. Anal.}, 35(2):309--340, 2013.
	
	\bibitem{Simons+2003a}
	F.~J. Simons, R.~D. van~der Hilst, and M.~T. Zuber.
	\newblock Spatiospectral localization of isostatic coherence anisotropy in
	{A}ustralia and its relation to seismic anisotropy: Implications for
	lithospheric deformation.
	\newblock {\em J. Geophys. Res.}, 108(B5):2250, 2003.
	
	\bibitem{MR2812375}
	F.~J. Simons and D.~V. Wang.
	\newblock Spatiospectral concentration in the {C}artesian plane.
	\newblock {\em GEM Int. J. Geomath.}, 2(1):1--36, 2011.
	
	\bibitem{simons2000isostatic}
	F.~J. Simons, M.~T. Zuber, and J.~Korenaga.
	\newblock Isostatic response of the {A}ustralian lithosphere: Estimation of
	effective elastic thickness and anisotropy using multitaper spectral
	analysis.
	\newblock {\em J. Geophys. Res. B}, 105(B8):19163--19184, 2000.
	
	\bibitem{MR181766}
	D.~Slepian.
	\newblock Prolate spheroidal wave functions, {F}ourier analysis and
	uncertainty. {IV}. {E}xtensions to many dimensions; generalized prolate
	spheroidal functions.
	\newblock {\em Bell System Tech. J.}, 43:3009--3057, 1964.
	
	\bibitem{sle78}
	D.~Slepian.
	\newblock Prolate spheroidal wave functions, {F}ourier analysis, and
	uncertainty—{V}: {T}he discrete case.
	\newblock {\em Bell System Technical Journal}, 57(5):1371--1430, 1978.
	
	\bibitem{MR140732}
	D.~Slepian and H.~O. Pollak.
	\newblock Prolate spheroidal wave functions, {F}ourier analysis and
	uncertainty. {I}.
	\newblock {\em Bell System Tech. J.}, 40:43--63, 1961.
	
	\bibitem{MR169015}
	H.~Widom.
	\newblock Asymptotic behavior of the eigenvalues of certain integral equations.
	{II}.
	\newblock {\em Arch. Rational Mech. Anal.}, 17:215--229, 1964.
	
	\bibitem{MR3887338}
	Z.~Zhu, S.~Karnik, M.~B. Wakin, M.~A. Davenport, and J.~Romberg.
	\newblock R{OAST}: {R}apid orthogonal approximate {S}lepian transform.
	\newblock {\em IEEE Trans. Signal Process.}, 66(22):5887--5901, 2018.
	
\end{thebibliography}
\end{document}